\newcommand{\samed}{\overset{d}{\sim}}
\newcommand{\pp}{{\mathbb P}}
\newcommand{\witi}{\widetilde}
\newcommand{\zz}{{\mathbb Z}}
\newcommand{\Z}{\zz}
\newcommand{\nn}{{\mathbb N}}
\newcommand{\N}{\nn}
\newcommand{\eqd}{{\overset{d}{\sim}}}
\newcommand{\veps}{\varepsilon}
\newcommand{\beq}{\begin{eqnarray*}}
\newcommand{\feq}{\end{eqnarray*}}
\newcommand{\beqn}{\begin{eqnarray}}
\newcommand{\feqn}{\end{eqnarray}}
\newcommand{\ind}{{\bf I}}
\newtheorem{theorem}{Theorem}
\makeatletter \@addtoreset{theorem}{section}\makeatother
\newtheorem{lemma}[theorem]{Lemma}
\newtheorem{assume}[theorem]{Assumption}
\newtheorem*{theorem*}{Theorem}
\newtheorem{proposition}[theorem]{Proposition}
\newtheorem{corollary}[theorem]{Corollary}
\begin{document}
\title{A random walk with catastrophes}
\author{
Iddo Ben-Ari\thanks{Department of Mathematics, University of Connecticut, Storrs, CT 06269-1009, USA; \newline e-mail:
iddo.ben-ari@uconn.edu}
\and
Alexander~Roitershtein\thanks{Department of Mathematics, Iowa State University, Ames, IA 50011, USA;\newline e-mail: roiterst@iastate.edu}
\and
Rinaldo B. Schinazi \thanks{Department of Mathematics, University of Colorado, Colorado Springs, CO 80933-7150, USA;\newline e-mail:
rschinaz@uccs.edu}
}
\date{September 9, 2017}
\maketitle
\begin{abstract}
	Random population dynamics with catastrophes (events pertaining to possible elimination of  a large portion of the population) has a long history in the mathematical literature. In this paper we study an ergodic model for random population dynamics with linear growth and binomial catastrophes: in a catastrophe, each individual survives with some fixed probability, independently of the rest. Through a  coupling construction,  we obtain sharp two-sided bounds for the rate of convergence to stationarity which are applied to show that the model exhibits a cutoff phenomenon.
\end{abstract}
{\em MSC2010: } Primary~60J10, 60J80, secondary~92D25, 60K37.\\
\noindent{\em Keywords}: population models, catastrophes, persistence, spectral gap, cutoff.

\section{Introduction}
\subsection{Model}
\label{intro}
Consider a population with the following birth and death rules. Given two parameters $p\in (0,1)$ and $c\in (0,1]$, the population size is a discrete-time Markov chain $(X_t:t\in \Z_+)$ on the state-space $\Z_+$ of non-negative integers ($\zz_+:=\nn\cup \{0\}$) with transition function
$$\mathfrak p^{p,c} (i,j) = \begin{cases} p & j= i+1\\
(1-p)\binom{i}{j}(1-c)^{j}c^{i-j} & i \in \{0,\dots,j\}
\end{cases}
$$
When there is no risk of ambiguity, we will omit the superscripts $p,c$ and write ${\mathfrak p}$.
In words,  conditioned on the history of the process up to time $t$, the population size  at time $t+1$  is determined by tossing an independent coin with probability $p$ of success. In the case of success, the population increases by $1$, and in the case of a failure, also known as a {\it catastrophe}, the population is an independent binomial with parameters $X_t$ and $1-c$. That is, in a catastrophe, each individual survives with probability $1-c$ independently of the other, and is otherwise killed. Note that ${\mathfrak p}$ is aperiodic and irreducible. As will be shown below, ${\mathfrak p}$ is also geometrically ergodic in total variation.
\par
The model is a version of subcritical branching process (the catastrophes) with linear migration (population increase), and belongs to a larger class of stochastic models with catastrophes  extensively studied in the literature. The term ``catastrophe" loosely refers to events where a large proportion or the entire population may be wiped out. There are many ways to model catastrophes and several were studied in the literature.  We discuss the literature in Section \ref{sec:literature} below. The particular model we study corresponds to binomial catastrophes of \cite[Section 2]{Neuts}.
\subsection{Motivation}
 Our original interest in the model came from a curiously strong persistence feature we observed in simulations: repulsion from zero and long fluctuations in a narrow band before first hitting zero. Figure~\ref{fig1} shows a simulation of the model for $p=0.4$ and $c=0.01$, between times $0$ and $10^5$. The initial population size is $X_0=10$. The population climbs quickly  and fluctuates in a narrow band  around an empirical mean close to $66.667$ for a very long time.  In Section \ref{average} we show that the process is mean-reverting around the mean of its stationary distribution. Corollary \ref{cor:bigtheta} shows that already after 1500 steps the total variation distance between the process and its stationary distribution is bounded above by $0.001$. These, along with the fact that the expectation of the first extinction time is of the order $10^{24}$, shown in Section \ref{average}, give at least a partial explanation to the simulations.
 \par
 Additional motivation for our work on the model is in its amenability to coupling methods yielding sharp bounds on the rate of convergence to stationarity. These  allow us to prove that the process exhibits the cutoff phenomenon. These results form the bulk of our work.
\begin{figure}[ht]
	\label{fig1}
	\includegraphics[width=10cm]{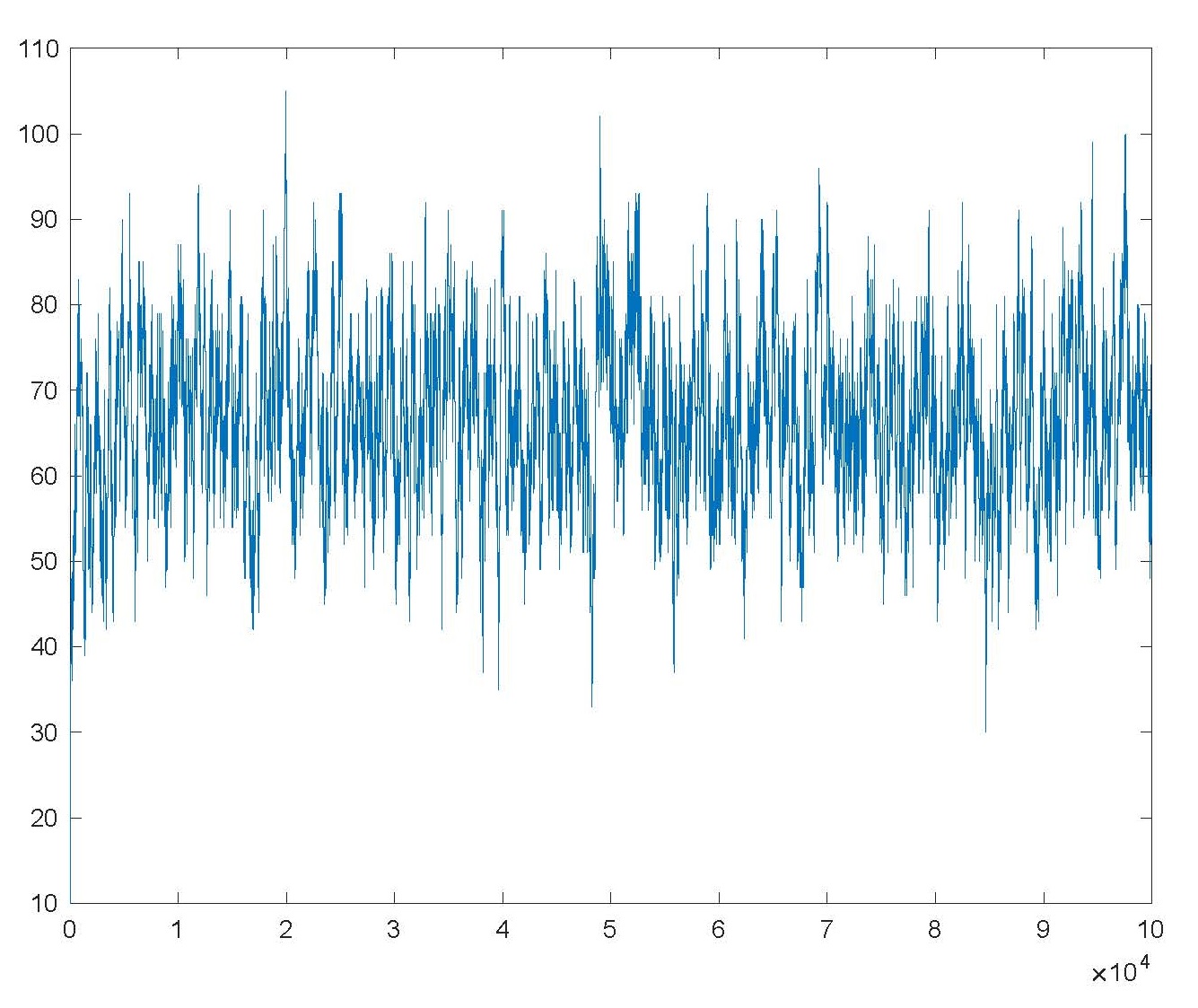}
	\caption{Long fluctuations of the random walk around its average before hitting zero}
	\end{figure}
\subsection{Literature}
\label{sec:literature}
Stochastic models with catastrophes are studied in mathematical literature since mid-1970's, for a first systematic account and a review
of the early literature see \cite{Brockwell et al.}. For a motivation and background in biological sciences see, for instance, \cite{Ewens, Hanson, Lande, Mangel}. Most of the work in the literature concern with either continuous time (generalized) birth and death chains with catastrophes or ODE-based models with a random disturbance. For a recent review and an extensive bibliography see \cite{Kapodistria}. The persistence feature is discussed for models with catastrophes in, for instance, \cite{persistent4,Mangel}. We remark that despite the variety of mathematical approaches to modeling population catastrophes, some results seem to be of a universal nature and are exhibited by models of different types. As an example, we mention the logarithmic dependence of the first  extinction time on the initial population size which we discuss in Section~\ref{subs-ext}.
\par
As mentioned above, the model we study is a particular version of binomial catastrophes case in the model introduced by Neuts in \cite[Section~2]{Neuts}. A continuous-time analogue of our model was introduced in \cite[Section~4]{Brockwell et al.}. For recent progress, see \cite{Artalejo,Economou,Kapodistria}.  In our model, deaths occur in a branching fashion, and in Section~\ref{branch} we reformulate and discuss the model as a special branching process with immigration in a random environment. The study of branching processes as  models of population growth with catastrophes (or disasters) goes back to at least \cite{Kaplan}, where a branching process without immigration is considered. Due to their tractability, much attention in the literature has been received by models with a deterministic growth between catastrophes, so called semi-stochastic models \cite{Cairns,Hanson,Hanson1,Leite}.
\par
Many results in the literature focus on the phase transition between survival and non survival, see \cite{Brockwell} and \cite{Fabio}. The results concerning first extinction times and stationary distributions are typically given in terms of Laplace transform or generating functions, see \cite{Brockwell} and  \cite{Kapodistria}.
\subsection{Organization}
 \par
 In Section 2 we give a probabilistic representation of the stationary distribution of the process.
The bulk of our contribution is reported in Sections~\ref{coupling} and \ref{poiss}. In Section~\ref{coupling} we introduce a coupling and use it compute sharp bounds on the total variation distance between the distributions of the process starting from two different initial states. In Section~\ref{poiss} we consider a sequence of models whose stationary  distribution converges to a Poisson limit. We show that this sequence exhibit a cutoff phenomenon, namely on a certain time scale the total variation distance to the stationary distribution drops from one to zero in a narrow time window.  Our study of both topics appear to be original in the context of stochastic models with catastrophes and we are not aware of similar results in the literature for any type of such models. Finally, in Section 5 we estimate the first extinction time and we use a branching representation for several purposes.
\par
Throughout the paper, the notation $a_n\sim b_n$ stands for $\lim_{n\to\infty} \frac{a_n}{b_n}=1$, and $X\eqd Y$ indicates that
the random variables $X$ and $Y$ have the same distribution.
\section{Stationary distribution}
\label{average}
\subsection{Representation formula}
\label{sec:repform}
Given a $\Z_+$-valued random variable $R$ and $\veps \in [0,1]$, write $\mbox{Bin}(R,\veps)$ for the random variable which, conditioned on $R$ is binomial with parameters $R$ and $\veps$. We begin with the following lemma  whose proof is omitted.
\begin{lemma}
\label{lem:binom}
Suppose that $R_0,R_1\dots$ are independent $\Z_+$-valued random variables and let $\veps_0,\veps_1,\veps_2,\dots$ be a sequence taking values in $[0,1]$.  Assume that $\sum \veps_j E[R_j] <\infty$. For $j=0,1,\dots$, let $\mbox{Bin}_j(R_j,\veps_j)$ be  $\mbox{Bin}(R_j,\veps_j)$-distributed, with $(\mbox{Bin}_j(R_j,\veps_j):j\ge 1)$ independent, conditional on $(R_j:j\in \Z_+)$. Let $R=\sum_{j=0}^\infty \mbox{Bin}_j(R_j,\veps_j)$ and $\veps>0$. Then $\mbox{Bin}(R,\veps)$ has the same distribution as $\sum_{j=0}^\infty \mbox{Bin}_j(R_j,\veps \veps_j)$.
\end{lemma}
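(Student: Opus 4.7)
The identity is an instance of the composition law for Bernoulli thinnings: keeping each of $n$ items independently with probability $\veps_j$ and then keeping each survivor independently with probability $\veps$ produces a $\mbox{Bin}(n,\veps\veps_j)$-distributed count. My plan is to realize both sides of the claimed equality on a common probability space via this double thinning. The only real preliminary is a finiteness check: since $\mbox{Bin}_j(R_j,\veps_j)$ has unconditional mean $\veps_j E[R_j]$, the hypothesis together with Tonelli's theorem gives
\[
E[R]=\sum_{j=0}^\infty \veps_j E[R_j] <\infty,
\]
so $R<\infty$ almost surely and $\mbox{Bin}(R,\veps)$ is well defined. The same estimate (with $\veps\veps_j$ in place of $\veps_j$) shows that $\sum_j \mbox{Bin}_j(R_j,\veps\veps_j)$ is also almost surely finite.

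For the coupling I would enlarge the probability space to carry a doubly indexed array $\{(U_{j,k},V_{j,k}):j\in\Z_+,\,k\in\nn\}$ of mutually independent Bernoulli random variables, independent of $(R_j)_{j\in\Z_+}$, with $U_{j,k}\sim\mbox{Ber}(\veps_j)$ and $V_{j,k}\sim\mbox{Ber}(\veps)$. Set $Y_j:=\sum_{k=1}^{R_j}U_{j,k}$ and $Y_j':=\sum_{k=1}^{R_j}U_{j,k}V_{j,k}$. Conditional on $(R_j)$, the $Y_j$'s are independent with $Y_j\sim\mbox{Bin}(R_j,\veps_j)$, so we may identify $R=\sum_j Y_j$. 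Likewise, since $U_{j,k}V_{j,k}\sim\mbox{Ber}(\veps\veps_j)$ and these products are mutually independent and independent of $(R_j)$, conditional on $(R_j)$ the $Y_j'$'s are independent with $Y_j'\sim\mbox{Bin}(R_j,\veps\veps_j)$. Hence
\[
\sum_{j=0}^\infty Y_j' \eqd \sum_{j=0}^\infty \mbox{Bin}_j(R_j,\veps\veps_j).
\]

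To close the argument I would observe that, conditionally on the $\sigma$-algebra $\calg:=\sigma\bigl((R_j),(U_{j,k})\bigr)$, the quantity $R$ is determined and the sum $\sum_j Y_j'$ is a sum of exactly $R$ mutually independent $\mbox{Ber}(\veps)$ variables --- namely those $V_{j,k}$ for which $1\le k\le R_j$ and $U_{j,k}=1$. Therefore the conditional law of $\sum_j Y_j'$ given $\calg$ is $\mbox{Bin}(R,\veps)$, which depends only on $R$; integrating out the rest of $\calg$ shows that $\sum_j Y_j'$ is a copy of $\mbox{Bin}(R,\veps)$. Combined with the distributional identity of the previous paragraph, this yields the lemma. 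I do not foresee any substantive obstacle: the argument is a clean bookkeeping of a two-step thinning, and the only point that requires a word of care is that the double sum of Bernoulli variables converges almost surely, which is precisely what the finiteness hypothesis in the first paragraph delivers.
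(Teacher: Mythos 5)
Your argument is correct, and in fact the paper omits the proof of this lemma entirely (``whose proof is omitted''), so there is no official proof to compare against. Your two-step thinning coupling is exactly the right idea and is fully rigorous as written: the Tonelli computation $E[R]=\sum_j\veps_jE[R_j]<\infty$ justifies almost-sure finiteness of both sums, the products $U_{j,k}V_{j,k}$ give the conditional $\mbox{Bin}(R_j,\veps\veps_j)$ laws, and conditioning on $\calg=\sigma\bigl((R_j),(U_{j,k})\bigr)$ correctly identifies $\sum_jY_j'$ as a sum of exactly $R$ independent $\mbox{Ber}(\veps)$ variables, hence as $\mbox{Bin}(R,\veps)$. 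The only cosmetic remark is that the hypothesis should read $\veps\in(0,1]$ rather than $\veps>0$ for $\mbox{Bin}(R,\veps)$ to make sense; this is a defect of the statement, not of your proof. For comparison, the route most consistent with the paper's style (see the proof of Proposition~\ref{pr:inv}) would be via probability generating functions: $E[s^{\mbox{\small Bin}(R,\veps)}]=E[(1-\veps(1-s))^R]=\prod_jE[(1-\veps\veps_j(1-s))^{R_j}]$, which is the generating function of $\sum_j\mbox{Bin}_j(R_j,\veps\veps_j)$. That computation is shorter but requires the same independence bookkeeping; your coupling has the advantage of producing an explicit almost-sure construction rather than only a distributional identity.
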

For $\alpha \in (0,1]$, write $\mbox{Geom}^{-}(\alpha)$ for the shifted Geometric distribution with probability mass function equal to $(1-\alpha)^k\alpha,k\in\Z_+$. Observe that if  $R_0\samed \mbox{Geom}^{-}(\alpha)$, then
\begin{equation}
\label{eq:laplace_shifted_geom}
 E[s^{R_0}]=\alpha\sum_{k=0}^\infty s^k(1-\alpha)^k= \frac{\alpha}{1-(1-\alpha)s},\qquad s\in[0,1].
\end{equation}
The following proposition gives the stationary distribution for $X$. Note that \cite[formula (12)]{Neuts} gives the generating function of the stationary distribution for a class of Markov chains. Our model is in that class. The next proposition gives a probabilistic representation of the stationary distribution for our model .  An interpretation through branching processes representation is discussed in Section \ref{branch}.
\begin{proposition}
\label{pr:inv}
Let $R_0,R_2,\dots$ be IID $\mbox{Geom}^-(1-p)$, $\veps_j=(1-c)^j$ for $j\in \Z_+$. Let $R$ be as in Lemma~\ref{lem:binom}, and let $\pi$ be its distribution. Then $\pi$ is  stationary for $\mathfrak p$.
\end{proposition}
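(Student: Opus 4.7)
My plan is to identify $\pi$ through its probability generating function $g(s):=E[s^R]$ and verify the stationarity equation $\pi=\pi\mathfrak p$ in pgf form. The key ingredient is a shift-of-index identity satisfied by $g$ that encodes the interaction between binomial thinning and the product structure of $R$.

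First I would check that Lemma~\ref{lem:binom} applies to the $R$ constructed in the proposition: from~\eqref{eq:laplace_shifted_geom} with $\alpha=1-p$ one reads off $E[R_j]=p/(1-p)$, so $\sum_{j\ge 0}\veps_j E[R_j]=p/\bigl((1-p)c\bigr)<\infty$ and $R$ is a.s.\ finite, hence $\pi$ is a probability distribution on $\Z_+$. Conditioning on the first transition, the invariance $\pi=\pi\mathfrak p$ is equivalent to
\[
g(s)=p\,s\,g(s)+(1-p)\,g\!\left(c+(1-c)s\right),
\]
since the conditional pgf of $\mbox{Bin}(X_0,1-c)$ given $X_0$ equals $(c+(1-c)s)^{X_0}$.

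Next I would compute $g$ using the independence of the summands in the definition of $R$ together with~\eqref{eq:laplace_shifted_geom}:
\[
g(s)=\prod_{j=0}^\infty E\!\left[\bigl(1-(1-c)^j(1-s)\bigr)^{R_j}\right]=\prod_{j=0}^\infty f_j(s),\qquad f_j(s):=\frac{1-p}{1-p+p(1-c)^j(1-s)}.
\]
The crucial observation is that $s\mapsto c+(1-c)s$ sends $1-s$ to $(1-c)(1-s)$ and therefore $f_j\mapsto f_{j+1}$. Telescoping then yields
\[
g\!\left(c+(1-c)s\right)=\prod_{j\ge 1}f_j(s)=\frac{g(s)}{f_0(s)}=\frac{1-ps}{1-p}\,g(s),
\]
where I used $f_0(s)=(1-p)/(1-ps)$. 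Substituting back into the invariance equation gives $ps\,g(s)+(1-ps)\,g(s)=g(s)$, as required.

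There is also a purely probabilistic route that avoids generating functions: apply Lemma~\ref{lem:binom} with $\veps=1-c$ to obtain $\mbox{Bin}(R,1-c)\eqd\sum_{j\ge 0}\mbox{Bin}_j(R_j,(1-c)^{j+1})$, then reindex $k=j+1$ and use that the $R_j$ are i.i.d.\ to identify this in distribution with $R$ after dropping the $j=0$ term (which is exactly $R_0\sim\mbox{Geom}^-(1-p)$). The only genuine difficulty is spotting this shift-of-index phenomenon; once it is in hand the verification reduces to elementary algebra and all the hard work has been packaged into Lemma~\ref{lem:binom}.
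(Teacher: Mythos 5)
Your proof is correct and rests on exactly the same idea as the paper's: binomial thinning by $1-c$ shifts the index $j\mapsto j+1$ in the representation of $R$, so $\mbox{Bin}(R,1-c)$ is $R$ with the $\mbox{Geom}^-(1-p)$ factor $R_0$ removed, which in pgf form gives $g(c+(1-c)s)=g(s)/f_0(s)=\frac{1-ps}{1-p}g(s)$ and closes the stationarity equation. The ``purely probabilistic route'' you sketch at the end is precisely the paper's proof (via Lemma~\ref{lem:binom} with $\veps=1-c$ and reindexing), and your primary argument is just its faithful translation into the explicit infinite-product formula for the generating function.
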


In the degenerate case $c=1$, $\pi$ is $\mbox{Geom}^-(1-p)$-distributed. In Section~\ref{branch} we  discuss the case when $p$ and $c$ are both close to one.

\begin{proof}[Proof of Proposition \ref{pr:inv}]
Suppose that $X_0\sim R$. We verify that $X_1\sim R$ through the generating function of $X_1.$
For $s\in[0,1],$ we have
\begin{equation}
\label{eq:stat}
E[ s^{X_1}]=p s E[s^{X_0}]+(1-p) E\bigl[s^{\mbox{\small Bin}(X_0,1-c)}\bigr].
\end{equation}
By Lemma \ref{lem:binom}, we have that
\beq
\mbox{Bin}(X_0,1-c) \eqd \sum_{j=0}^\infty \mbox{Bin}_j \bigl(R_j,(1-c)(1-c)^j\bigr)\eqd \sum_{j=1}^\infty \mbox{Bin}_j\bigl(R_j,(1-c)^j\bigr).
\feq
The sum of $\mbox{Bin}(X_0,1-c)$ and $R_0$ has the same distribution as $X_0$. In other words:
\beq
E\bigl[s^{\mbox{\small Bin}(X_0,1-c)}\bigr]\cdot E[ s^{R_0}]=E[s^{X_0}].
\feq
Thus \eqref{eq:stat} becomes
\beq
E[ s^{X_1}]=\Bigl(ps+ \frac{1-p}{E[s^{R_0}]}\Bigr)E [s^{X_0}]=E [ s^{X_0}],
\feq
where the last identity is due to \eqref{eq:laplace_shifted_geom}  with $\alpha=1-p.$
\end{proof}
Before continuing to our next topic we briefly discuss several related observations.

Using Proposition~\ref{pr:inv} and identity \eqref{eq:laplace_shifted_geom} we have
\begin{align*}
\pi(0)& =\prod_{j=0}^\infty P\bigl(\mbox{Bin}_j\bigl(R_j,(1-c)^j\bigr)=0\bigr)=
\prod_{j=0}^\infty E\bigl[\bigl(1-(1-c)^j\bigr)^{R_j}\bigr]
      \\ & =\prod_{j=0}^\infty \frac{1-p}{1-p(1-(1-c)^j)}.
\end{align*}
Let $\tau$ be the hitting time of $0$, or the first extinction time,
\begin{equation}
\label{tau}
\tau = \inf\{t\ge 1: X_t = 0\}.
\end{equation}
Thus,
\beq E_0[\tau]=\frac{1}{\pi(0)}=\prod_{j=0}^\infty \Bigl(1+\frac{p}{1-p}(1-c)^j\Bigr).
\feq
In the biological literature, this  expected value is often referred to as the \textit{persistence time} of the model \cite{Cairns,persistent,Mangel}. Using that
\beqn
\label{lineq}
x-\frac{x^2}{2}\leq \ln(1+x)\leq x, \qquad \forall~|x|<1,
\feqn
we get for $p<1/2$ that
\beq
\frac{p}{c(1-p)}-\frac{1}{2}\frac{p^2}{(1-p)^2(1-(1-c)^2)}\leq \ln E_0[\tau]\leq \frac{p}{c(1-p)}.
\feq
For example, for  $p=0.4$ and $c=0.1$, we get $E_0[\tau]\ge244$. For $p=0.4$ and $c=0.01$, we get $ E_0[\tau]\ge 10^{24}$.
\subsection{Mean Reversal}
\label{fmean}
 It follows from Proposition~\ref{pr:inv} that
\beqn
\label{exp}
\mu:=E_\pi[X_n]=\sum_{j=0}^\infty E[R_j](1-c)^j=\frac{p}{c(1-p)}.
\feqn
Note that the local drift of $X$
\beqn
\label{ld}
\delta_t:=E[X_{t+1}|X_t]-X_t=p-(1-p)cX_t=p\Bigl(1-\frac{X_t}{\mu}\Bigr)
\feqn
has the sign opposite to the deviation from $\mu$. Thus the random walk always drifts toward its expected value.  We also comment that the probability to hit $0$ in the next step decays geometrically with the state of the system, that is
\beq
P(X_{t+1}=0|X_t)=(1-p)c^{X_t}.
\feq
These observations suggest that the process will tend to fluctuate about its mean before the first extinction, as can be seen in the simulation, see Figure \ref{fig1}.
\section{Coupling and convergence to stationarity}
\label{coupling}
\subsection{Construction of the coupling}
\label{sec:coupling}
The key result of this section is a coupling of the probability laws $P_x(X_t\in\cdot)$ and $P_y(X_t\in\cdot)$, obtained from a simple representation of the process.

Let $x,y\in \Z_+$ with $x<y$. Set $X_0=x$, $X_0'=y$ and $H_0=y-x$.
We continue inductively, assuming $((X_s,X'_s,H_s),s\le t)$ were defined and $ X'_s=X_s+H_s$ for all $s\le t$.  Conditioned on $( (X_s,X'_s,H_s),s\le t)$,
 \begin{itemize}
 	\item With probability $p$, independently of the past, $H_{t+1}=H_t$, $X_{t+1}=X_t+1$ and  $X'_{t+1}=X'_t +1$.
 	\item  Otherwise, that is with probability $1-p$, set
	$$X_{t+1} = \mbox{Bin}(X_t,1-c)\mbox{ and }H_{t+1}=\mbox{Bin}(H_t,1-c),$$
	independent of each other and of the past. Moreover, set $  X'_{t+1}=X_{t+1} + H_{t+1}.$
 \end{itemize}
 It immediately follows that $X$ and $X'$ are both copies of our Markov chain and that
 $$X_t'=X_t +H_t$$
 for all $t$. In addition, the process $(H_t:t\in \Z_+)$ is non-increasing. Write $P_{x,y}$ and $E_{x,y}$ for the joint distribution and expectation of  $X$ and $X'$.  Let $\xi$ be the coupling time of two marginal processes, that is
\beqn
\label{ctime}
\xi=\inf\{t\ge 0: X_t=X'_t\}=\inf\{t\in \Z_+:H_t=0\}
\feqn
If $H_t>0$, then $H_{t+1}=H_t$ with probability equal to
$$p+(1-p)(1-c)^{H_t}<p+(1-p)(1-c)= 1-c(1-p).$$
Therefore, it immediately  follows that under $P_{x,y}$,  $\xi$ is stochastically dominated by a sum of $y-x$ independent copies of Geometric random variables with parameter $(1-p)c$. Hence, $\xi<\infty $, $P_{x,y}$-a.s. and has a geometric tail. Furthermore, $X_t=X'_t$ for all $t \ge \xi$.  Let $N_t$ denote the number of catastrophes up to time $t$.
Then  $N_t\eqd \mbox{Bin}(t,1-p)$. It follows from the construction of the coupling that
\begin{equation}
	\label{eq:H_cond}
	P_{x,y}(H_t\in \cdot | N_t )  \eqd \mbox{Bin}(y-x,(1-c)^{N_t}).
\end{equation}
Therefore,
\begin{equation}
\label{eq:zetad}
P_{x,y}(\xi > t)= P_{x,y}(H_t > 0)= 1- E [ (1-(1-c)^{N_t})^{y-x} ]\le  (y-x)E [ (1-c)^{N_t}],
\end{equation}
where the last inequality is due to Bernoulli's inequality.
Letting \begin{equation}
\label{ap}\alpha= p+(1-p)(1-c)=1-c (1-p),
\end{equation}
 we have that
\begin{equation}
\label{eq:binom_moment}
 E [(1-c)^{N_t}] =\alpha^t.
 \end{equation}
 Therefore
 \begin{equation}
 \label{eq:xi_ubound}
 P_{x,y}(\xi >t)\le (y-x)\alpha^t.
 \end{equation}
 We comment that this bound is asymptotically sharp as $t\to\infty$. That is
 \beqn
 \label{asym}
 P_{x,y}(\xi>t)\sim (y-x)E\bigl[ (1-c)^{N_t}\bigr]=(y-x)\alpha^t,
 \feqn
 as can be seen by expanding the expression $(1-(1-c)^{N_t})^{y-x}$ through the binomial theorem and taking expectation.
\subsection{Upper bounds on total variation}
\label{tot_up}
Recall that the total variation distance between two probability measures $Q_1$ and $Q_2$ on $\Z_+$ is defined as
$$\|Q_1-Q_2\|_{TV}=\max_{A\subset \zz_+} |Q_1(A)-Q_2(A)|=\max_{A\subset \Z_+}\left( Q_1(A) - Q_2(A)\right).$$
For $x,y,t\in \Z_+$, let
\beq
d_t(x,y):=\|P_y(X_t\in \cdot)-P_x(X_t\in \cdot)\|_{TV} .
\feq
By Aldous' coupling inequality \cite{Thor}, $d_t(x,y) \le P_{x,y}(\xi >t)$.
By combining this inequality and \eqref{eq:xi_ubound} we have proved
\begin{proposition}
	\label{pr:cthm}
	 Let $\alpha = 1- c (1-p)$. Then for $x,y,t \in \Z_+$,
	  $$d_t(x,y)\leq |y-x|\alpha^t.$$
\end{proposition}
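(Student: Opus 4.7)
The proposition is essentially a direct consequence of the machinery already assembled in Section~\ref{sec:coupling}, so the plan is mostly to assemble the pieces in the right order. Without loss of generality assume $x\le y$, since the total variation distance is symmetric in its arguments and $|y-x|$ is as well; the case $x=y$ is trivial with $d_t(x,x)=0$.

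The first step is to invoke the coupling $(X_t,X_t')$ constructed above with $X_0=x$, $X_0'=y$, together with Aldous' coupling inequality, which gives
$$d_t(x,y)=\|P_y(X_t\in\cdot)-P_x(X_t\in\cdot)\|_{TV}\le P_{x,y}(X_t\ne X_t')=P_{x,y}(\xi>t),$$
using the fact established in the construction that $X_s=X_s'$ for every $s\ge\xi$. The second step is to plug in the tail bound \eqref{eq:xi_ubound}, namely $P_{x,y}(\xi>t)\le (y-x)\alpha^t$ with $\alpha=1-c(1-p)$.

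Strictly speaking, the only thing that really requires care is this tail bound, but it has already been derived just above the statement: the representation \eqref{eq:H_cond} expresses $H_t$ conditionally on $N_t$ as $\mathrm{Bin}(y-x,(1-c)^{N_t})$, Bernoulli's inequality converts the event $\{H_t>0\}$ into a linear bound in $y-x$, and the binomial identity \eqref{eq:binom_moment} $E[(1-c)^{N_t}]=\alpha^t$ (which follows because $N_t\eqd \mathrm{Bin}(t,1-p)$ and $p+(1-p)(1-c)=\alpha$) closes the estimate. So there is no genuine obstacle; the proof is a one-line chaining of Aldous' inequality with \eqref{eq:xi_ubound}, and the role of the proposition is simply to record the resulting total variation bound in a form convenient for the cutoff analysis to follow.
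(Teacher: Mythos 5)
Your proposal is correct and follows exactly the paper's own route: Aldous' coupling inequality applied to the coupling of Section~\ref{sec:coupling}, followed by the tail bound \eqref{eq:xi_ubound}, which itself rests on \eqref{eq:H_cond}, Bernoulli's inequality, and \eqref{eq:binom_moment}. Nothing is missing.
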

Recall from \eqref{exp} that
 $$\mu = \sum_y y\pi(y)= \frac{p}{c(1-p)}.$$
 We have
\begin{corollary}
For all $x,t\in\zz_+,$
	\label{corol}
	$$ d_t(x,\pi) = \|P_x (X_t \in \cdot) - \pi \|_{TV} \le
\Bigl(x-\mu + 2 \sum_{y>x} (y-x)\pi(y)\Bigr) \alpha^t.$$
In particular,
$$ d_t (0,\pi)\le \mu \alpha^t.$$
\end{corollary}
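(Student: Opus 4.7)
The plan is to reduce the bound on $d_t(x,\pi)$ to the pairwise bound of Proposition~\ref{pr:cthm} by mixing over the stationary distribution, and then to identify the resulting first-moment expression with the quantity in the statement.

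First I would exploit the stationarity $\pi = \sum_y \pi(y) P_y(X_t \in \cdot)$ together with convexity of the total variation distance: writing
\beq
P_x(X_t\in\cdot) - \pi = \sum_y \pi(y)\bigl(P_x(X_t\in\cdot) - P_y(X_t\in\cdot)\bigr),
\feq
the triangle inequality yields $d_t(x,\pi) \le \sum_y \pi(y)\, d_t(x,y)$. (Equivalently, one realizes $(X_0,X_0')=(x,Y)$ with $Y\sim\pi$ independently, then runs the coupling of Section~\ref{sec:coupling} between $x\wedge Y$ and $x\vee Y$; Aldous' coupling inequality gives the same bound.) Plugging in Proposition~\ref{pr:cthm} gives
\beq
d_t(x,\pi) \;\le\; \alpha^t \sum_{y\in\Z_+} \pi(y)\,|y-x| \;=\; \alpha^t\, E_\pi|X_0-x|.
\feq

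Next I would rewrite $E_\pi|X_0-x|$ in the desired form by the elementary identity $|y-x| = (x-y) + 2(y-x)^+$. Taking expectation under $\pi$ and using \eqref{exp},
\beq
E_\pi|X_0-x| \;=\; (x-\mu) + 2\sum_{y>x}(y-x)\pi(y),
\feq
which is exactly the coefficient appearing in the statement. Note that finiteness of $\mu$ (established in~\eqref{exp}) guarantees the right-hand side is finite, so the exchange of sum and subtraction is legitimate.

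Finally, for the particular case $x=0$ the same identity collapses: $(0-\mu)+2\sum_{y>0}y\pi(y) = -\mu + 2\mu = \mu$, giving $d_t(0,\pi)\le \mu\alpha^t$. There is no real obstacle here beyond the algebraic bookkeeping; the only conceptual point to check is that mixing the pointwise coupling bound over $\pi$ is legitimate, which follows from stationarity and the convexity of $\|\cdot\|_{TV}$ with respect to its arguments.
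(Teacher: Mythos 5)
Your proof is correct and follows essentially the same route as the paper: both decompose $\pi$ by stationarity as a $\pi$-mixture of the laws $P_y(X_t\in\cdot)$, apply the triangle inequality and Proposition~\ref{pr:cthm} to get the bound $\alpha^t\sum_y|y-x|\pi(y)$, and then use the identity $\sum_y|y-x|\pi(y)=x-\mu+2\sum_{y>x}(y-x)\pi(y)$. No issues.
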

\begin{proof}
	For any $A\subset \zz_+,$
	$$
	\left |P_x (X_t \in A) - \pi(A)\right |\leq \sum_{y=0}^\infty \left |P_x (X_t \in A) - P_y (X_t \in A)\right|\pi(y)\le \sum_{y} |y-x|\pi(y) \alpha^t,$$
	where the inequality follows from Proposition \ref{pr:cthm}. The result follows because of the identity
	$ \sum_{y} |y-x| \pi(y)=x-\mu + 2 \sum_{y>x} (y-x)\pi(y)$.
\end{proof}
\subsection{Lower bounds on total variation}
The goal of this section is to obtain a lower bound for $d_t(x,y)$ which is of the same order as the lower bound in  Proposition~\ref{pr:cthm}. We comment that the difficulty in proving such a result stems from the fact that the state space is infinite, because couplings which preserve linear ordering on a finite state space always satisfy this property, see \cite{Burdzy} for a proof in continuous-time setting. \par

We need to introduce some notation.  Let
$$ \witi p = \frac{p}\alpha = \frac{p}{1-c(1-p)}.$$
The notation $P^{(\witi p)}_x$ is the law of the Markov chain $X$ with initial state $X_0=x$, and  transition function ${\mathfrak p}^{\witi p,c}$. We will also refer to the corresponding stationary distribution as $\pi^{(\witi p)}$.

The main result of this section is the following theorem.
\begin{theorem}
\label{lthm}
Let  $x,y,t\in\Z_+$ with $x<y$. Then
$$d_t(x,y)\ge \alpha^t \max_{j\in\zz_+}\sum_{k=x}^{y-1}P_k^{(\witi p)}(X_t=j).$$
\end{theorem}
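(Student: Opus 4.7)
The plan is to obtain an exact signed formula for $P_y(X_t=j) - P_x(X_t=j)$ in terms of the tilted chain, and then extract the desired lower bound on $d_t(x,y)$ by choosing a suitable upper-tail set.

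I would first reduce matters to the unit-increment case $y=x+1$ by exploiting the monotone coupling of Section~\ref{sec:coupling}. With $H_0=1$ the difference $H_t$ lives in $\{0,1\}$, and $H_t=1$ precisely when the single extra particle survives every catastrophe up to time $t$; from \eqref{eq:H_cond} and \eqref{eq:binom_moment}, $P(H_t=1)=\alpha^t$. The crucial ingredient is a \emph{tilting identity}: conditional on $H_t=1$, the sequence of birth/catastrophe marks up to time $t$ is i.i.d.\ with birth probability $p/\alpha=\witi p$. This is a one-line Bayes computation, since a path with $b$ births and $t-b$ catastrophes has unconditional weight $p^b(1-p)^{t-b}$ and makes the extra particle survive with conditional probability $(1-c)^{t-b}$, so the posterior weight is proportional to $p^b\bigl((1-p)(1-c)\bigr)^{t-b}$. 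Because the $\mbox{Bin}(X_s,1-c)$ thinning at a catastrophe is governed by coins independent of the extra particle's coin, the dynamics of $(X_s)_{s\le t}$ are unchanged under the conditioning. Hence, given $H_t=1$, the process $X$ has the same law as the $\mathfrak p^{\witi p,c}$-chain started at $x$.

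Combining this with $X'_t=X_t+H_t$, the contributions from $\{H_t=0\}$ cancel in $P(X'_t=j)-P(X_t=j)$, which yields
\[
P_{x+1}(X_t=j)-P_x(X_t=j)=\alpha^t\bigl[P_x^{(\witi p)}(X_t=j-1)-P_x^{(\witi p)}(X_t=j)\bigr].
\]
Telescoping over the pairs $(k,k+1)$ for $k=x,\ldots,y-1$ gives, for general $x<y$,
\[
P_y(X_t=j)-P_x(X_t=j)=\alpha^t\sum_{k=x}^{y-1}\bigl[P_k^{(\witi p)}(X_t=j-1)-P_k^{(\witi p)}(X_t=j)\bigr].
\]

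For the final step, fix $j\in\zz_+$ and bound $d_t(x,y)$ from below using $A=\{j,j+1,\ldots\}$. Summing the last identity over $j'\ge j$ telescopes each inner bracket in $j'$, leaving
\[
d_t(x,y)\ge P_y(X_t\ge j)-P_x(X_t\ge j)=\alpha^t\sum_{k=x}^{y-1}P_k^{(\witi p)}(X_t=j-1).
\]
Maximizing over $j$ (equivalently, re-indexing over $j-1\in\zz_+$) delivers the claimed inequality. The substantive step is the tilting identity in the first paragraph; once that is in hand, the rest is bookkeeping through two telescoping arguments (one in the state variable $k$, and one in the upper-tail index $j'$).
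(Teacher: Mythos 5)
Your proof is correct and is essentially the paper's argument: your tilting identity is exactly Lemma~\ref{lem:onemore}(2) (you prove it by a Bayes computation on path weights, the paper by reweighting the binomial law of $N_t$ — the same calculation), and your telescoping over $k$ is the proof of Theorem~\ref{lthm} via Lemma~\ref{oml}. The only cosmetic difference is that you test against the upper-tail set $\{j,j+1,\dots\}$ and telescope the signed pointwise identity in $j'$, whereas the paper uses the complementary set $A_{j-1}=\{0,\dots,j-1\}$ and observes the cancellation directly.
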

Before turning to the proof, we note the following
\begin{corollary}
	\label{corol1}
	Suppose that   $j^*\in\zz_+$ maximizes  $\pi^{(\witi p)}(\cdot)$.  Then
	\beq
	\pi^{(\witi p)}(j^*) \le  \liminf_{t\to\infty}  \frac{d_t(x,y)}{|y-x|\alpha^t} \le\limsup_{t\to\infty} \frac{d_t(x,y)}{|y-x|\alpha^t}\le 1.
	\feq
\end{corollary}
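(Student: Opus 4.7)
The proof has two parts. The upper bound $\limsup_{t\to\infty} d_t(x,y)/(|y-x|\alpha^t)\le 1$ is immediate from Proposition~\ref{pr:cthm}, which asserts $d_t(x,y)\le |y-x|\alpha^t$ for every $t\in\zz_+$.

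For the lower bound my plan is to combine Theorem~\ref{lthm} with the Markov chain convergence theorem applied to the tilted chain governed by $\mathfrak p^{\witi p,c}$. The preliminary observation is that $\witi p=p/(1-c(1-p))$ still lies in $(0,1)$: indeed $\witi p<1$ is equivalent to $p(1-c)+c<1$, which holds strictly because $p<1$ and $c\le 1$. Consequently $\mathfrak p^{\witi p,c}$ is another instance of the same population model, and so is irreducible, aperiodic, and positive recurrent with stationary distribution $\pi^{(\witi p)}$. Starting from the bound in Theorem~\ref{lthm} and lower bounding the maximum by its value at the single index $j=j^*$, I get
$$\frac{d_t(x,y)}{|y-x|\alpha^t}\ \ge\ \frac{1}{y-x}\sum_{k=x}^{y-1}P_k^{(\witi p)}(X_t=j^*).$$
By the Markov chain convergence theorem applied separately to each of the finitely many starting states $k\in\{x,\ldots,y-1\}$, the $k$-th summand tends to $\pi^{(\witi p)}(j^*)$ as $t\to\infty$. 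Summing and dividing by $y-x$, then passing to the $\liminf$, yields $\liminf_{t\to\infty} d_t(x,y)/(|y-x|\alpha^t)\ge \pi^{(\witi p)}(j^*)$, which is the stated lower bound.

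I do not anticipate a genuine obstacle: once Theorem~\ref{lthm} is in hand, the argument is essentially one line together with pointwise convergence of the $\witi p$-chain to its stationary distribution. The only ancillary point worth mentioning is that a maximizer $j^*$ exists at all; this is automatic since $\sum_j \pi^{(\witi p)}(j)=1$ forces $\pi^{(\witi p)}(j)\to 0$ as $j\to\infty$, so the supremum is attained at some finite index.
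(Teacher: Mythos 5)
Your argument is exactly the paper's: the upper bound is Proposition~\ref{pr:cthm}, and the lower bound follows from Theorem~\ref{lthm} together with ergodicity of the chain $\mathfrak p^{\witi p,c}$, which gives $P_k^{(\witi p)}(X_t=j^*)\to\pi^{(\witi p)}(j^*)$ for each of the finitely many $k$. The only tiny slip is your claim that $\witi p<1$ follows from $c\le 1$ — the computation actually requires $c<1$ (for $c=1$ one gets $\witi p=1$ and the tilted chain is not ergodic) — but the paper makes the same implicit restriction, so this does not affect the assessment.
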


In particular, $\lim_{t\to\infty} \frac{1}{t}\log d_t(x,y)=\alpha$, thus the $L_\infty$ spectral gap of the Markov chain $X$ is
	$1-\alpha=p(1-c),$ see \cite{Kmeyn}.
 The upper bound is Proposition \ref{pr:cthm}. As for the lower bound,
 the ergodicity of the chain ${\mathfrak p}^{\witi p,c}$
 shows that for every $j\in \Z_+$, each summand in Theorem \ref{lthm}  $P_k^{(\witi p)}(X_t=j)\sim \pi^{(\witi p)}(j)$ as  $t\to\infty$. \par

 We prove Theorem \ref{lthm} through two lemmas.
\begin{lemma}~
\label{lem:onemore}
For all $x,t\in\zz_+,$
\begin{enumerate}
	\item $P_{x,x+1}(\xi >t)=\alpha^t$.
	\item $P_{x,x+1}(X_t \in \cdot | \xi >t  )= P_x^{(\witi p)}(X_t\in \cdot).$
	\end{enumerate}
\end{lemma}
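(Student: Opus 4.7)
Under $P_{x,x+1}$ the coupling starts from $H_0 = 1$, and since binomial thinning keeps $\{0,1\}$ invariant and $(H_t)$ is non-increasing, one has $H_t \in \{0,1\}$ for every $t$. Consequently $\{\xi > t\} = \{H_t = 1\} = \{H_s = 1 \text{ for all } s \le t\}$, and both parts can be read off from the one-step behaviour of $H$ on the state $1$.

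For part (1), directly from the coupling rules
\beq
P_{x,x+1}(H_{t+1} = 1 \mid H_t = 1) = p + (1-p)(1-c) = \alpha,
\feq
the first term corresponding to a growth event (in which $H$ is preserved) and the second to a catastrophe in which the single unit of discrepancy survives. The Markov property of $H$ then gives $P_{x,x+1}(\xi > t) = \alpha^t$ (and in particular makes the bound \eqref{eq:xi_ubound} sharp when $y = x+1$).

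For part (2), I would induct on $t$, showing the stronger statement that the conditional joint law of $(X_0, \ldots, X_t)$ under $P_{x,x+1}(\,\cdot \mid \xi > t)$ equals that of the first $t+1$ coordinates of a Markov chain with transition function $\mathfrak p^{\widetilde p, c}$ started at $x$. For the inductive step, fix a history $X_s = x_s$, $s \le t$, with $H_t = 1$; the coupling construction disintegrates the next step as
\beq
P_{x,x+1}\bigl(X_{t+1} = j,\, H_{t+1} = 1 \,\big|\, X_s = x_s,\ s \le t,\ H_t = 1\bigr) = p\,\mathbf{1}_{\{j = x_t + 1\}} + (1-p)(1-c)\binom{x_t}{j}(1-c)^{j}c^{x_t - j}.
\feq
The crucial point is that inside a catastrophe the thinning that defines $X_{t+1}$ from $X_t$ is drawn independently of the thinning that defines $H_{t+1}$ from $H_t$, so conditioning on the survival of the single excess unit leaves the conditional law of $X_{t+1}$ given $X_t$ intact. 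Dividing by the marginal $\alpha$ from part (1) converts the weights $p$ and $(1-p)(1-c)$ into $\widetilde p$ and $1 - \widetilde p$, and the right-hand side becomes exactly $\mathfrak p^{\widetilde p, c}(x_t, j)$. Combining this with the inductive hypothesis closes the induction and, in particular, yields the marginal identity in the statement.

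The only genuinely delicate step is the one I have flagged: establishing that conditioning on $\{H_{t+1} = 1\}$ does not perturb the conditional law of $X_{t+1}$ given $X_t$ inside a catastrophe. This is immediate from the independence of the two binomial thinnings built into the coupling, but must be invoked explicitly; the rest of the argument is a direct transcription of the coupling dynamics.
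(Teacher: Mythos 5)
Your proof is correct, but it takes a genuinely different route from the paper's, at least for part (2). The paper conditions on $N_t$, the total number of catastrophes up to time $t$: it observes that $X_t$ and $\xi$ are conditionally independent given $N_t$, computes $P_{x,x+1}(\xi>t\mid N_t=k)=(1-c)^k$, and then notices that the factor $(1-c)^k P(N_t=k)$ is exactly $\alpha^t$ times the $\mbox{Bin}(t,1-\witi p)$ mass at $k$ --- a global change of measure on the catastrophe count that immediately explains why the parameter $\witi p = p/\alpha$ appears. Since the law of $X_t$ given $N_t$ does not depend on $p$, summing over $k$ yields the claim in one stroke. You instead run a one-step computation for the pair $(X,H)$ and induct on $t$; the same structural fact (independence of the two binomial thinnings within a catastrophe, which you correctly flag as the crux) enters locally rather than globally, and dividing the joint one-step kernel by $\alpha$ turns the weights $p$ and $(1-p)(1-c)$ into $\witi p$ and $1-\witi p$. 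Your argument buys something slightly stronger than the stated lemma: the entire conditioned path $(X_0,\dots,X_t)$ given $\{\xi>t\}$ is Markov with transition function $\mathfrak p^{\witi p,c}$, not merely the time-$t$ marginal. The paper's version is shorter and generalizes more readily to $y-x>1$ (it is reused almost verbatim in the proof of Theorem~\ref{quasi}, where $H_t$ is no longer $\{0,1\}$-valued and your reduction to a two-state chain would not apply). Part (1) is fine in both treatments; yours via the autonomous two-state Markov chain $H$ is the more elementary, the paper's via $E[(1-c)^{N_t}]=\alpha^t$ from \eqref{eq:H_cond}.
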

\begin{proof}[Proof of Lemma~\ref{lem:onemore}]
	The first claim follows immediately from \eqref{eq:H_cond} with $y=x+1$. We turn to the second claim. Conditioned on $N_t$, $\xi$ and $X_t$ are independent. Therefore,
\beq
P_{x,x+1}(X_t = j, \xi >t |N_t=k)&=& P_{x,x+1}(X_t=j|N_t=k)P_{x,x+1}(\xi>t|N_t=k)
\\
&=&
P_x(X_t=j|N_t=k)(1-c)^k.
\feq
 Since $N_t\samed\mbox{Bin}(t,1-p),$
\beq
(1-c)^k P(N_t = k) &=& \binom{t}{k} \bigl((1-p)(1-c)\bigr)^k p^{t-k} \binom{t}{k} (\alpha-p)^kp^{t-k}
\\
&=&
\alpha^t\binom{t}{k} (1-\witi p)^k {\witi p}^{t-k}.
\feq
This gives
\beq
P_{x,x+1}(X_t = j, \xi >t )=\alpha^t \sum_{k=0}^\infty P_x(X_t=j|N_t=k)P^{(\witi p)}(N_t=k).
\feq
The distribution of $X_t$ conditioned on $N_t$ does not depend on the parameter $p$, and from this we obtain
\beq
P_{x,x+1}(X_t=j,\xi>t)= \alpha^t P^{(\witi p)}_x (X_t=j),
\feq
and the result follows.
\end{proof}
\begin{lemma}
\label{oml}
For $j\in\zz_+,$ let $A_j = \{0,\dots,j\}.$ Then
\beq
P_x (X_t \in A_j)-P_{x+1}(X_t \in A_j)\ge \alpha^t \max_{j\in\zz_+} P_x^{(\witi p)}(X_t=j).
\feq
\end{lemma}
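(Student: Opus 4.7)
The plan is to apply the coupling of Section~\ref{sec:coupling} with the specific initial condition $y = x+1$, in which case $H_0 = 1$ and, because $H_t$ is non-increasing, $H_t \in \{0,1\}$ for every $t$. In particular $H_t = \ind_{\{\xi > t\}}$, so under $P_{x,x+1}$ we have the clean dichotomy $X_t' = X_t + 1$ on $\{\xi > t\}$ and $X_t' = X_t$ on $\{\xi \le t\}$.

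Fix any $j \in \zz_+$. Since $X_t \le X_t'$ always, the event $\{X_t \le j\} \setminus \{X_t' \le j\}$ is precisely $\{X_t \le j < X_t'\}$. Using $X_t' - X_t \in \{0,1\}$, this event reduces to $\{X_t = j,\ \xi > t\}$. Hence
\beq
P_x(X_t \in A_j) - P_{x+1}(X_t \in A_j) = P_{x,x+1}(X_t \le j) - P_{x,x+1}(X_t' \le j) = P_{x,x+1}(X_t = j,\ \xi > t).
\feq

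Now I would invoke Lemma~\ref{lem:onemore}: part~(1) gives $P_{x,x+1}(\xi > t) = \alpha^t$, and part~(2) gives $P_{x,x+1}(X_t = j \mid \xi > t) = P_x^{(\witi p)}(X_t = j)$. Multiplying, the right-hand side of the displayed equality above equals $\alpha^t P_x^{(\witi p)}(X_t = j)$. Since $j$ was arbitrary, choosing $j$ on the left-hand side to be a maximizer of $P_x^{(\witi p)}(X_t = \cdot)$ yields the claimed inequality (in fact, the identity)
\beq
P_x(X_t \in A_j) - P_{x+1}(X_t \in A_j) \ge \alpha^t \max_{j \in \zz_+} P_x^{(\witi p)}(X_t = j).
\feq

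There is no real obstacle here; the entire content is packaged into the preparatory Lemma~\ref{lem:onemore}. The only conceptual point to highlight is why restricting to the gap $y - x = 1$ suffices in the end: this reduction makes $H_t$ a $\{0,1\}$-valued indicator, which is exactly what collapses the difference of cumulative distribution functions to a single point mass and lets the telescoping in Theorem~\ref{lthm} proceed by summing these rank-one increments from $k = x$ to $k = y-1$.
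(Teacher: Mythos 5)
Your proof is correct and follows essentially the same route as the paper: both reduce to the coupling with gap one, identify $P_x(X_t\in A_j)-P_{x+1}(X_t\in A_j)$ with $P_{x,x+1}(X_t=j,\xi>t)$ (the paper via a telescoping sum of indicators, you via the equivalent set-difference observation $\{X_t\le j\}\setminus\{X_t'\le j\}=\{X_t=j,\ \xi>t\}$), and then apply Lemma~\ref{lem:onemore}. Your handling of the $\max_j$ by choosing $j$ to be a maximizer of the pointwise identity is exactly how the bound is meant to be read and used in Theorem~\ref{lthm}.
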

\begin{proof}[Proof of Lemma~\ref{oml}]
Clearly,
\begin{align*}
P_x (X_t \in A_j)-P_{x+1}(X_t \in A_j) & = E_{x,x+1}\bigl[ {\bf 1}_{A_j}(X_t)-{\bf 1}_{A_j}(X_t'),\xi>t\bigr]
\\
&
=
\sum_{k=0}^{j-1} E_{x,x+1} [ {\bf 1}_{\{k\}}(X_t)-{\bf 1}_{\{k+1\}}(X_t'),\xi>t]
\\
&
\qquad
+P_{x,x+1}(X_t=j,\xi>t)-P_{x,x+1}(X_t'=0,\xi>t).
\end{align*}
Since  for $t<\xi$ we have $X'_t=X_t+1$, it follows that the expectations under the summation sign are all zero, and that the last summand is also zero. Therefore,
\beq
P_x (X_t \in A_j)-P_{x+1}(X_t \in A_j)= P_{x,x+1}(X_t=j,\xi>t)=\alpha^t P^{(\witi p)}_x(X_t=j),
\feq
where the equality follows from Lemma \ref{lem:onemore}. The proof of the lemma is complete.
\end{proof}
\begin{proof}[Proof of Theorem \ref{lthm}]
 Let  $A_j$ be as in the proof of Lemma~\ref{oml}. Then
\beq
d_t(x,y)\ge P_x (X_t \in A_j)-P_y(X_t \in A_j)=\sum_{k=x}^{y-1}\bigl( P_k (X_t \in A_j)-P_{k+1}(X_t \in A_j) \bigr),
\feq
and the theorem follows by virtue of Lemma~\ref{oml}.
\end{proof}
We conclude this section with the following generalization of Lemma \ref{lem:onemore}.
\begin{theorem}
\label{quasi}
$P_{x,y}(X_t \in \cdot | \xi>t)$ converges in distribution to $\pi^{(\witi p)}$ as $t\to\infty.$
\end{theorem}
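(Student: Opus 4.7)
The plan is to reduce the general case to the same reasoning that powers Lemma~\ref{lem:onemore}. Two structural facts underlie the argument: conditional on $N_t$, the coupling time $\xi$ and the position $X_t$ are independent under $P_{x,y}$; and the conditional law of $X_t$ given $N_t$ does not depend on the parameter $p$. Combined with \eqref{eq:H_cond}, which supplies
\[
P_{x,y}(\xi > t \mid N_t) = 1 - \bigl(1 - (1-c)^{N_t}\bigr)^{y-x},
\]
these two facts will let me write the unnormalized joint probability $P_{x,y}(X_t = j,\, \xi > t)$ as an alternating linear combination of probabilities for tilted Markov chains with parameters derived from $p$ and $c$.

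Concretely, for each $m \in \{1,\dots,y-x\}$ I would set
\[
\alpha_m = p + (1-p)(1-c)^m, \qquad \witi p_m = \frac{p}{\alpha_m},
\]
so that $\alpha_1 = \alpha$ and $\witi p_1 = \witi p$. The tilting identity that appears inside the proof of Lemma~\ref{lem:onemore} generalizes immediately to
\[
(1-c)^{mk}\, P(N_t = k) = \alpha_m^t\, P^{(\witi p_m)}(N_t = k),
\]
and hence, using that $P_x(X_t = j \mid N_t)$ does not involve $p$,
\[
E\bigl[(1-c)^{mN_t}\, P_x(X_t = j \mid N_t)\bigr] = \alpha_m^t\, P_x^{(\witi p_m)}(X_t = j).
\]
Expanding $(1 - (1-c)^{N_t})^{y-x}$ by the binomial theorem and taking expectations would then yield
\[
P_{x,y}(X_t = j,\, \xi > t) = \sum_{m=1}^{y-x} (-1)^{m+1}\binom{y-x}{m}\, \alpha_m^t\, P_x^{(\witi p_m)}(X_t = j),
\]
together with the companion identity for $P_{x,y}(\xi > t)$ obtained by summing over $j$, which is of course consistent with \eqref{asym}.

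The final step is an asymptotic dominance argument. The sequence $\alpha_m$ is strictly decreasing in $m$, so $\alpha^t = \alpha_1^t$ dominates every other $\alpha_m^t$ as $t \to \infty$. Dividing both displayed identities by $\alpha^t$, bounding each $P_x^{(\witi p_m)}(X_t = j) \le 1$ to kill the $m \ge 2$ terms, and invoking ergodicity of $\mathfrak p^{\witi p, c}$ for the leading $m = 1$ term, the numerator tends to $(y-x)\,\pi^{(\witi p)}(j)$ and the denominator to $y - x$, giving
\[
P_{x,y}(X_t = j \mid \xi > t) \xrightarrow[t\to\infty]{} \pi^{(\witi p)}(j),
\]
which is the claimed convergence in distribution on $\Z_+$. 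No serious obstacle appears along the way; the one delicate point is the asymptotic dominance of the $m = 1$ term, and this is handed to us by the strict monotonicity $\alpha_m < \alpha$ for $m \ge 2$ together with the trivial bound on the tilted probabilities.
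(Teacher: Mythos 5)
Your proof is correct, and it rests on the same two structural facts that drive the paper's argument --- the conditional independence of $\xi$ and $X_t$ given $N_t$, and the fact that the law of $X_t$ given $N_t$ does not involve $p$ --- but it organizes the computation along a genuinely different decomposition. The paper splits $\{\xi>t\}$ over the values of $H_t$, uses \eqref{eq:H_cond} to show that the contribution of $\{H_t=1\}$ dominates, and then applies the single tilting (to $\witi p$) plus the bounded convergence theorem to $P_{x,y}(X_t=j,H_t=1)$. You instead expand $1-(1-(1-c)^{N_t})^{y-x}$ by the binomial theorem and tilt each moment $E[(1-c)^{mN_t}P_x(X_t=j\mid N_t)]$ separately, arriving at the exact finite identity
\begin{equation*}
P_{x,y}(X_t=j,\,\xi>t)=\sum_{m=1}^{y-x}(-1)^{m+1}\binom{y-x}{m}\,\alpha_m^t\,P_x^{(\witi p_m)}(X_t=j),
\qquad \alpha_m=p+(1-p)(1-c)^m,\ \witi p_m=\frac{p}{\alpha_m},
\end{equation*}
together with its marginal over $j$. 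This buys you a closed-form expression with no $(1+o(1))$ bookkeeping until the final limit, at the price of introducing a family of auxiliary chains; the strict inequality $\alpha_m<\alpha_1=\alpha$ for $m\ge 2$ (which requires $c<1$, an assumption that is implicit throughout since $\witi p=1$ and $\pi^{(\witi p)}$ degenerates when $c=1$) kills all terms but $m=1$ after dividing by $\alpha^t$, and ergodicity of ${\mathfrak p}^{\witi p,c}$ finishes the proof exactly as in the paper. The paper's route, by contrast, additionally identifies the conditional law $P_{x,y}(X_t\in\cdot\mid H_t=1)$ as asymptotically $\pi^{(\witi p)}$, which says a bit more about the coupling itself; your route yields the sharper exact formula for $P_{x,y}(X_t=j,\xi>t)$ as a by-product.
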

\begin{proof}
		First,
		$$P_{x,y}(\xi>t) = \sum_{k=1}^{y-x}P_{x,y}(H_t=k).$$
		Let $M=y-x$ and $\theta = 1-c$. Then from  \eqref{eq:H_cond}, we have
		 		$$ P_{x,y}(H_t = k) = \binom{M}{k}E [ (1-\theta^{N_t})^{M-k}\theta^{N_t k}].$$
		For  $\rho <1$, $E[\rho^{N_t}]=(\rho (1-p)+p)^t$, and  it follows from the binomial formula that
		$$ P_{x,y}(H_t=k) = \binom{M}{k}(\theta^k(1-p)+p)^t(1+o(1))\mbox{ as }t\to\infty.$$
		As a result,
		$$P_{x,y}(\xi>t) = P_{x,y}(H_t=1)(1+o(1)) = (y-x)\alpha^t(1+o(1)).$$
		Next, repeating the argument in the proof of Lemma \ref{lem:onemore} we obtain
		\begin{align*}
		P_{x,y}(X_t = j , H_t = 1)&=\sum_{k=0}^\infty P(X_t=j | N_t=k)P_{x,y}(H_t=1|N_t=k)P(N_t= k)\\
		&  =\sum_{k=0}^\infty  P_{x}^{(\witi p)}(X_t=j|N_t=k)(y-x)(1-(1-c)^k)^{y-x-1}(1-c)^kP(N_t=k)\\
		& = \alpha^t (y-x)E^{(\witi p)}_{x} [{\bf 1}_{\{j\}}(X_t)(1-(1-c)^{N_t})^{y-x-1}]\\
		& = \alpha^t (y-x)P^{(\witi p)}_{x}(X_t=j)(1+o(1))\\
		\end{align*}
		with the last line follows from the binomial theorem and the bounded convergence theorem. Thus,
		$$ 	P_{x,y}(X_t = j | H_t = 1)=P_x^{(\tilde p)}(X_t=j)+o(1).$$
		Putting it all together,
		\begin{align*} P_{x,y}(X_t=j|\xi>t) &= \frac{\sum_{k=1}^{y-x}P_{x,y}(X_t=j,H_t=k)}{P_{x,y}(\xi>t)}\\
		& =\frac{P_{x,y}(X_t=j|H_t=1)P_{x,y}(H_t=1)+P_{x,y}(H_t=1)o(1)}{P_{x,y}(H_t=1)(1+o(1))}\\
		& =P_{x,y}(X_t=j|H_t=1)+o(1)\\
		& =P^{(\witi p)}_x(X_t=j)+o(1)\\
		& = \pi^{(\witi p)}(j) + o(1).
		\end{align*}
The proof of the theorem is complete.
	\end{proof}
\section{Poisson limit and a cutoff phenomenon}
\label{poiss}
In this section we let $p$ and $c$ tend to $0$.  We will work under  the following assumption
\begin{assume}
	\label{as:poisson}
For $n\in \N$ $p_n,c_n \in (0,1)$ with $p_n \to 0$ and
$$\lim_{n\to\infty}\frac{p_n }{c_n} = \beta\in (0,\infty).$$
\end{assume}
 We will use the superscript $(n)$ to denote the dependence of the total variation distance, probability, expectation, and stationary distribution of the parameters, e.g. the stationary distribution for the process with parameters $p_n$ and $c_n$ will be denoted by $\pi^{(n)}$.
\begin{theorem}
\label{limit}
Assume \ref{as:poisson}. Then $\pi^{(n)}$ converges weakly  to  $\mbox{Pois}(\beta)$ as $n\to\infty$.
\end{theorem}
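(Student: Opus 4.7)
The plan is to use the probabilistic representation from Proposition~\ref{pr:inv} to compute the probability generating function (PGF) of $\pi^{(n)}$ in closed form, and then apply the continuity theorem for PGFs on $\Z_+$ (convergence at each $s\in[0,1)$ suffices for weak convergence).

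First, I would compute the PGF of $\pi$. For a single summand, using conditioning and \eqref{eq:laplace_shifted_geom} with $\alpha=1-p$,
\begin{equation*}
E\bigl[s^{\mathrm{Bin}_j(R_j,(1-c)^j)}\bigr]=E\bigl[(1-(1-c)^j(1-s))^{R_j}\bigr]=\frac{1-p}{1-p+p(1-c)^j(1-s)}.
\end{equation*}
By independence of the summands in the representation of $R$,
\begin{equation*}
E[s^R]=\prod_{j=0}^\infty \frac{1}{1+q(1-c)^j(1-s)},\qquad q:=\frac{p}{1-p},
\end{equation*}
so that, writing $u=1-s\ge 0$,
\begin{equation*}
-\log E[s^R]=\sum_{j=0}^\infty \log\bigl(1+q(1-c)^j u\bigr).
\end{equation*}

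Next, I would specialize to $p=p_n$, $c=c_n$ and pass to the limit. Since $p_n\to 0$, we have $q_n\sim p_n$ and $q_n(1-c_n)^j u\le q_n u\to 0$ uniformly in $j$. Applying the two-sided bound \eqref{lineq} termwise,
\begin{equation*}
\Bigl|\sum_{j=0}^\infty \log\bigl(1+q_n(1-c_n)^j u\bigr)-q_n u\sum_{j=0}^\infty (1-c_n)^j\Bigr|\le \frac{(q_n u)^2}{2}\sum_{j=0}^\infty (1-c_n)^{2j}.
\end{equation*}
The leading term equals $(q_n u)/c_n$, which converges to $\beta u$ under Assumption~\ref{as:poisson}, while the error is bounded by $(q_n u)^2/(2(1-(1-c_n)^2))\sim q_n u\cdot (q_n u)/(2c_n)\to 0$ since $q_n\to 0$. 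Hence $E[s^{R^{(n)}}]\to e^{-\beta(1-s)}$ for every $s\in[0,1]$, which is the PGF of $\mathrm{Pois}(\beta)$. The continuity theorem for PGFs on $\Z_+$ then delivers the desired weak convergence.

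The only delicate point is the interchange of summation with the small-argument estimate for $\log$, but that is controlled uniformly in $j$ by the factor $(1-c_n)^j$, so no obstacle of substance arises. The argument is essentially a Poisson limit theorem applied to the sum of small binomials $\mathrm{Bin}_j(R_j,(1-c_n)^j)$, made transparent by the factorization of the PGF.
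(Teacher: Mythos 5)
Your proof is correct and follows essentially the same route as the paper: the paper computes the log-Laplace transform $\Lambda(t)=\ln E[e^{-tZ_n}]$, which is just your generating function evaluated at $s=e^{-t}$, factors it over the independent binomial summands from Proposition~\ref{pr:inv}, and applies the same two-sided bound $0\le x-\ln(1+x)\le x^2/2$ termwise with the identical leading term $\frac{p_n}{(1-p_n)c_n}(1-s)\to\beta(1-s)$ and the identical error estimate. No substantive difference.
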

The proof is a routine calculation of moment generating functions, and the proof appears at the end of the section. We note that the actual form of the limit distribution is irrelevant for our next and main result of this section, the cutoff phenomenon, although we do rely on the tightness of $(\pi^{(n)}:n\in\N)$ to prove the second claim below.
\begin{theorem}
	\label{th:cutoff}
	Let Assumption \ref{as:poisson} hold.
	Let $(y_n:n\in\Z_+)$ be a sequence of a real numbers satisfying $\lim_{n\to\infty}y_n = \infty$.  Set
	$$ t_n = \frac{\ln y_n}{c_n}.$$
Then, for every $\epsilon>0$	
	\begin{enumerate}
			\item
			$$\displaystyle\lim_{\epsilon \to 0} \limsup_{n\to\infty}\sup_{t>t_n + \frac{1}{\epsilon c_n}}d_t^{(n)}(y_n,\pi^{(n)})=0.$$
			\item		
		$$ \lim_{n\to\infty} \inf_{t<t_n - b_n }d_t^{(n)}(y_n,\pi^{(n)})=1,$$
where
			$$ b_n = (1+\epsilon)\Bigl(\frac 12 \ln y_n+ \frac{\ln\ln y_n}{c_n} \Bigr).$$
	\end{enumerate}
\end{theorem}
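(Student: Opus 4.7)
The plan is to treat the two parts separately.

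For Part~1 (upper bound), I would apply Corollary~\ref{corol} with $x = y_n$ to obtain
\[
d_t^{(n)}(y_n,\pi^{(n)}) \le \Bigl(y_n - \mu_n + 2\sum_{y>y_n}(y-y_n)\pi^{(n)}(y)\Bigr)\alpha_n^t, \qquad \alpha_n := 1 - c_n(1-p_n).
\]
Theorem~\ref{limit} combined with $\mu_n \to \beta$ forces uniform integrability of $(\pi^{(n)})$, so the tail sum is $o(1)$ and the prefactor equals $y_n(1+o(1))$. Using $\ln\alpha_n = -c_n(1-p_n)(1 + O(c_n))$ and $c_n t = \ln y_n + 1/\epsilon$ at $t = t_n + 1/(\epsilon c_n)$ gives $y_n\alpha_n^t = e^{-1/\epsilon + o(1)}$, whence $\limsup_n \sup_{t > t_n + 1/(\epsilon c_n)} d_t^{(n)}(y_n,\pi^{(n)}) \le e^{-1/\epsilon}$, vanishing as $\epsilon \to 0$.

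For Part~2 (lower bound), the plan is to use $X_t$ being atypically large as a distinguishing event. By tightness of $(\pi^{(n)})$ (from Theorem~\ref{limit}), for each $\eta > 0$ there exists $K_\eta \in \Z_+$ with $\pi^{(n)}([K_\eta,\infty)) < \eta$ for all large $n$, yielding
\[
d_t^{(n)}(y_n,\pi^{(n)}) \ge P_{y_n}^{(n)}(X_t \ge K_\eta) - \eta.
\]
It thus suffices to show $P_{y_n}^{(n)}(X_t \ge K_\eta) \to 1$ uniformly for $t \le t_n - b_n$. I bound $X_t \ge S_t$ where $S_t$ counts the initial $y_n$ individuals still alive at time $t$; then $S_t\,|\,N_t \sim \mbox{Bin}(y_n, (1-c_n)^{N_t})$ with $N_t \sim \mbox{Bin}(t, 1-p_n)$.

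For fixed large $K$, let $G_K = \{N_t \le (1-p_n)t + K\sqrt{p_n(1-p_n)t}\}$, for which Chebyshev gives $P(G_K^c) \le 1/K^2$. On $G_K$, $y_n(1-c_n)^{N_t} \ge y_n\alpha_n^t \cdot e^{-Kc_n\sqrt{p_n t}(1+o(1))}$; at $t = t_n - b_n$, one has $y_n\alpha_n^t \ge e^{(1+o(1))c_n b_n}$ and $c_n\sqrt{p_n t} = O(c_n\sqrt{\ln y_n})$ (using $p_n \sim \beta c_n$). The critical estimate is
\[
c_n b_n - K c_n\sqrt{\ln y_n} = (1+\epsilon)\ln\ln y_n + c_n\sqrt{\ln y_n}\bigl(\tfrac{1+\epsilon}{2}\sqrt{\ln y_n} - K\bigr) \ge (1+\epsilon)\ln\ln y_n \to \infty,
\]
valid for $n$ large, since $\sqrt{\ln y_n}$ eventually exceeds $2K/(1+\epsilon)$. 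This is exactly how the two summands of $b_n$ cooperate: the $\ln\ln y_n/c_n$ term drives unbounded growth, while the $\tfrac12\ln y_n$ term absorbs the $O(c_n\sqrt{\ln y_n})$ Gaussian-scale fluctuation of $N_t$. Hence on $G_K$, $E[S_t\,|\,N_t]$ exceeds some deterministic $M_n \to \infty$, and a Chernoff bound for the binomial $S_t\,|\,N_t$ gives $P(S_t < K_\eta\,|\,N_t, G_K) \to 0$. Combining, $P(S_t < K_\eta) \le 1/K^2 + o(1)$; sending $K \to \infty$ then $\eta \to 0$ completes the proof. Uniformity over $t \le t_n - b_n$ follows from stochastic monotonicity of $S_t$ in $t$.

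The main obstacle is verifying the critical estimate above: the specific form of $b_n$ is engineered so that its two summands together exceed the $c_n\sqrt{\ln y_n}$-scale fluctuation of $N_t$ across all parameter regimes, which is exactly what drives the conditional mean of initial survivors to infinity at $t = t_n - b_n$.
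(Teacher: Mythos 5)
Your proposal is correct, and the two parts should be judged separately. Part~1 is essentially the paper's argument in a more direct form: the paper goes through the triangle inequality $d_t(y_n,\pi^{(n)})\le d_t(y_n,0)+d_t(0,\pi^{(n)})$ and part~1 of Lemma~\ref{cuth}, whereas you apply Corollary~\ref{corol} at $x=y_n$ and use uniform integrability of $(\pi^{(n)})$ to control the prefactor; both reductions end at the same estimate, namely that $y_n\alpha_n^t$ is small for $t>t_n+\frac{1}{\epsilon c_n}$. (One shared caveat: since $\ln\alpha_n=-c_n(1-p_n)-\tfrac12 c_n^2(1-p_n)^2-\cdots$, one has $\ln y_n+t\ln\alpha_n= c_n\ln y_n(\beta-\tfrac12+o(1))-\tfrac1\epsilon(1+o(1))$ at $t=t_n+\frac{1}{\epsilon c_n}$; your ``$=e^{-1/\epsilon+o(1)}$'' and the paper's ``$\le \ln y_n-(\ln y_n+\theta)$'' both implicitly need $\beta\le\tfrac12$ or boundedness of $c_n\ln y_n$, so this is not a gap you introduced.) Part~2 is a genuinely different route. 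The paper distinguishes $P_{y_n}$ from $P_0$ using the event $\{X_t>\gamma_n(\theta)\}$ with a \emph{growing} threshold $\gamma_n\sim\beta\ln y_n$, bounds $N_t$ deterministically by $t$, and then needs a delicate Chernoff computation because the survivor mean $y_n(1-c_n)^{\nu_n}$ exceeds $\gamma_n$ only by a relative margin of order $\theta^2/\sqrt{\ln y_n}$; the growing threshold is forced there because under $P_0$ the process also has mean of order $\beta\ln y_n$. You separate $P_{y_n}$ only from the tight family $(\pi^{(n)})$, which permits a \emph{fixed} threshold $K_\eta$; the binomial concentration step then becomes trivial (conditional mean $\to\infty$ against a constant), at the price of controlling $N_t$ by Chebyshev instead of by $N_t\le t$. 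Your ``critical estimate'' correctly identifies the resulting bookkeeping: the $\tfrac12\ln y_n$ summand of $b_n$ absorbs both the second-order term of $\ln(1-c_n)$ (its only role in the paper's passage from $\nu_n(\theta)$ to $t_n-b_n$) and the extra $O(c_n\sqrt{\ln y_n})$ Gaussian fluctuation your route introduces, while $\ln\ln y_n/c_n$ drives the divergence of the conditional mean; the replacement of $(1-c_n)^{(1-p_n)t}$ by $\alpha_n^t$ costs only a factor $e^{-O(c_n^2\ln y_n)}$, which is likewise absorbed. The outcome is a sound and arguably simpler proof of the theorem as stated, with the trade-off that it does not recover the stronger two-initial-condition separation $d_t^{(n)}(0,y_n)\to1$ established in part~2 of Lemma~\ref{cuth}.
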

Therefore with a choice of parameters as in Theorem \ref{th:cutoff}, the model exhibits a cutoff at $t_n$ with window size  $O(\max (\ln y_n,\frac{\ln\ln y_n}{c_n}))$, see \cite[p. 248]{LPW}.

To prove the theorem we will use the following lemma.
\begin{lemma}
\label{cuth}
Assume the conditions for Theorem \ref{th:cutoff} hold.
For $\theta>0,$ let
\beq
\lambda_n(\theta):=\frac {\ln y_n + \theta }{c_n} \quad \mbox{\rm and} \quad
\nu_n(\theta):=\frac{\ln y_n - \ln \ln y_n-\ln \frac{p_n}{c_n}-\frac{\theta}{(\ln y_n)^{1/4}}}{-\ln(1-c_n)}
,\qquad n\in\nn.
\feq
Then
\begin{enumerate}
\item  $\lim_{\theta\to\infty}\bigl\{\limsup_{n\to\infty} \sup_{t>\lambda_n(\theta)} d^{(n)}_t(0,y_n)\bigr\}= 0.$
\item   $\lim_{\theta\to\infty}\bigl\{\liminf_{n\to\infty} \inf_{t<\nu_n} d^{(n)}_t(0,y_n)\bigr\}= 1.$
\end{enumerate}
\end{lemma}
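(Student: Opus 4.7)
The plan is to apply Proposition~\ref{pr:cthm} directly for the upper bound (part 1) and to combine the coupling of Section~\ref{sec:coupling} with a concentration estimate on $H_t$ for the lower bound (part 2).

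For part 1, Proposition~\ref{pr:cthm} gives $d_t^{(n)}(0, y_n) \le y_n \alpha_n^t$, monotonically decreasing in $t$, so it suffices to estimate $y_n \alpha_n^{\lambda_n(\theta)}$. Using the Taylor expansion $\ln \alpha_n = -c_n(1-p_n)(1 + O(c_n))$ together with the identity $c_n \lambda_n(\theta) = \ln y_n + \theta$, a direct computation gives
$$y_n \alpha_n^{\lambda_n(\theta)} = \exp\bigl(-\theta(1-p_n) + O\bigl((p_n + c_n)\ln y_n\bigr)\bigr),$$
which under Assumption~\ref{as:poisson} yields $\limsup_n y_n \alpha_n^{\lambda_n(\theta)} \le Ce^{-\theta}$ for a constant $C$; then $\theta \to \infty$ kills it.

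For part 2, I use the coupling $(X, X')$ with $(X_0, X'_0) = (0, y_n)$ and the identity $X'_t = X_t + H_t$. Fix $M \in \N$ and set $A_M = \{x > M\}$; then
$$d_t^{(n)}(0, y_n) \ge P_{y_n}(X_t > M) - P_0(X_t > M).$$
Markov's inequality combined with the mean formula \eqref{exp} gives $P_0(X_t > M) \le \mu_n/(M+1) \to \beta/(M+1)$ uniformly in $t$. By the coupling, $P_{y_n}(X_t > M) \ge P(H_t > M)$, and since $(H_t)$ is pathwise non-increasing, $\inf_{t < \nu_n(\theta)} P(H_t > M) \ge P(H_{\nu_n(\theta)} > M)$. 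By \eqref{eq:H_cond}, $H_{\nu_n(\theta)} \mid N_{\nu_n(\theta)}$ is $\mbox{Bin}(y_n, (1-c_n)^{N_{\nu_n(\theta)}})$, and the definition of $\nu_n(\theta)$ gives
$$y_n(1-c_n)^{\nu_n(\theta)} = \tfrac{p_n}{c_n}\ln y_n \cdot e^{\theta/(\ln y_n)^{1/4}} \sim \beta\ln y_n \to \infty.$$
A two-step concentration argument (Chebyshev for $N_{\nu_n}$ around its mean $\nu_n(1-p_n)$, then Chebyshev for $\mbox{Bin}(y_n, q)$ when $y_n q \to \infty$) yields $P(H_{\nu_n(\theta)} > M) \to 1$ for every fixed $M$; hence $\liminf_n \inf_{t < \nu_n(\theta)} d_t^{(n)}(0, y_n) \ge 1 - \beta/(M+1)$, and $M \to \infty$ concludes.

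The main obstacle is the two-step concentration for $H_{\nu_n(\theta)}$: fluctuations of $N_{\nu_n}$ of size $O(\sqrt{\ln y_n})$ induce multiplicative factors $(1-c_n)^{O(\sqrt{\ln y_n})} = e^{O(c_n\sqrt{\ln y_n})}$ on $(1-c_n)^{N_{\nu_n}}$, which must be controlled to ensure the binomial concentrates around its conditional mean. The correction $\theta/(\ln y_n)^{1/4}$ in the definition of $\nu_n(\theta)$ is chosen precisely to provide enough slack in the exponent to absorb these lower-order corrections uniformly in $n$, which is the delicate technical balance in the argument.
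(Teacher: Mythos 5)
Your part 1 is essentially the paper's argument: apply Proposition~\ref{pr:cthm} and evaluate $y_n\alpha_n^{\lambda_n(\theta)}$ (note that both you and the paper pass over the fact that the correction term of order $(p_n+c_n)\ln y_n$ in the exponent is not controlled by Assumption~\ref{as:poisson} alone; this is a shared issue, not one you introduced). Part 2, however, takes a genuinely different and more elementary route. The paper separates the two laws at a \emph{growing} threshold $\gamma_n(\theta)\asymp p_n\nu_n(\theta)\asymp\beta\ln y_n$ and applies Chernoff--Hoeffding on both sides: it dominates $X_t$ under $P_0$ by $\mbox{Bin}(t,p_n)$, whose mean grows like $\beta\ln y_n$ and therefore forces the threshold to grow, and the multiplicative gap of order $\theta/(\ln y_n)^{1/4}$ between $\gamma_n$ and $y_n(1-c_n)^{\nu_n}$ is exactly what makes the lower-tail Chernoff bound close, yielding the quantitative estimate $\limsup_n\sup_{t\le\nu_n}P^{(n)}_{y_n}(X_t\le\gamma_n)\le e^{-\beta\theta^4/48}$. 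You instead use a \emph{fixed} threshold $M$: Markov's inequality gives $P_0(X_t>M)\le\mu_n/(M+1)$ uniformly in $t$ (this needs the one-line drift recursion from \eqref{ld} showing $E_0[X_t]\le\mu_n$ for all $t$, not merely the stationary identity \eqref{exp} -- state that), while $X'_t\ge H_t\ge H_{\lceil\nu_n\rceil}$ dominates a binomial with mean $y_n(1-c_n)^{\nu_n}\sim\beta\ln y_n\to\infty$, so $P(H_{\nu_n}>M)\to1$. This is correct, and it actually yields the stronger conclusion $\liminf_n\inf_{t<\nu_n(\theta)}d^{(n)}_t(0,y_n)=1$ for \emph{every} fixed $\theta$, with no limit in $\theta$ required. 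Two remarks. First, your closing ``two-step concentration'' worry is a non-issue: since $N_t\le t$ deterministically, $(1-c_n)^{N_t}\ge(1-c_n)^t$, so no control of the fluctuations of $N_{\nu_n}$ is needed, and the correction $\theta/(\ln y_n)^{1/4}$ in $\nu_n(\theta)$ plays no role in your argument (it exists only to make the paper's Chernoff exponent close). Second, what your softer argument gives up is the explicit statement that $P^{(n)}_{y_n}(X_t\le\gamma_n)\to0$ for a threshold $\gamma_n\to\infty$, which the paper reuses in the proof of the second claim of Theorem~\ref{th:cutoff} to play against the tightness of $(\pi^{(n)})$; with your route one would rerun the same Markov/monotonicity estimate there with a slowly growing threshold in place of $M$.
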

\label{lem:bounds}
\begin{proof}
Let $\alpha_n = 1-c_n(1-p_n)$.
Recall that
	\begin{enumerate}
\item  By Proposition \ref{pr:cthm}, for any $t>\lambda_n(\theta),$
     \begin{align*}
\ln  d^{(n)}_t(0,y_n)&\leq \ln y_n+t\ln \alpha_n\leq \ln y_n+\lambda_n(\theta)\ln \alpha_n\\
     & = \ln y_n + \bigl(\ln y_n +\theta\bigr)\frac{\ln (1-c_n(1-p_n))}{c_n(1-p_n)} (1-p_n)\\
     & \leq \ln y_n - (\ln y_n +\theta) =  -\theta,
     \end{align*}
from which the first assertion of the lemma  follows.
\item  We will use the following Chernoff-Hoeffding bounds for a binomial distribution \cite{Chernoff}. If $X\samed \mbox{Bin}(m,p)$ for some $m\in\nn$ and $p\in(0,1),$ then for any $\delta\in (0,1),$
\beqn
\label{ch}
P\bigl(X>(1+\delta)pm\bigr)\leq e^{-\frac{\delta^2pm}{2}}\qquad \mbox{\rm and}\qquad
P\bigl(X<(1-\delta)pm\bigr)\leq e^{-\frac{\delta^2pm}{3}}.
\feqn
First, observe that under $P_0^{(n)}$,  $X_t$ is stochastically dominated by the number of births up to time $t$ whose distribution is $\mbox{Bin}(t,p_n)$. Let
\beq
\label{eq:gamma}
\gamma_n(\theta):=\Bigl(1+\frac{\theta}{2(\ln y_n)^{1/4}}\Bigr)p_n\nu_n(\theta).
\feq
In what follows, in order to simplify the notation, we will simply write $\nu_n$ and $\gamma_n$  instead of, respectively,
$\nu_n(\theta)$ and $\gamma_n(\theta).$
\par
By the Chernoff-Hoeffding inequality,  for any $t\leq \nu_n,$
\beq
P^{(n)}_0\bigl(X_t \ge \gamma_n \bigr)&\leq&
P\bigl(\mbox{Bin}(t,p_n) \ge \gamma_n \bigr) \leq P\bigl(\mbox{Bin}\bigl(\nu_n,p_n\bigr) \ge \gamma_n \bigr)
\\
&\leq&
\exp\Bigl( -\frac{\theta^2p_n\nu_n}{8\sqrt{\ln y_n}}\Bigr).
\feq
Therefore,
\beqn
\label{l1}
\lim_{n\to\infty} P^{(n)}_0\bigl(X_t \ge \gamma_n \bigr)=0.
\feqn
On the other hand, under  $P_{y_n}^{(n)}$, $X_t$  stochastically dominates
$\mbox{Bin}(y_n,(1-c_n)^{N_t}),$ which in turn, dominates $\mbox{Bin}\bigl(y_n,(1-c_n)^t\bigr).$
Notice that
\beqn
\label{takea}
y_n(1-c_n)^{\nu_n}= \frac{p_n}{c_n}\cdot \ln y_n \cdot e^{\frac{\theta}{(\ln y_n)^{1/4}}}.
\feqn

Thus, for $n$ large enough, we have
\beq
\frac{\gamma_n}{y_n(1-c_n)^t}&\leq& \frac{\bigl(1+\frac{\theta}{2(\ln y_n)^{1/4}}\bigr)p_n\nu_n}{y_n(1-c_n)^{\nu_n}}
=\frac{\bigl(1+\frac{\theta}{2(\ln y_n)^{1/4}}\bigr)c_n\nu_n}{(\ln y_n) e^{\frac{\theta}{(\ln y_n)^{1/4}}}}
\leq
\frac{1+\frac{\theta}{2(\ln y_n)^{1/4}}}{e^{\frac{\theta}{(\ln y_n)^{1/4}}}}
\\
&\leq&
\Bigl(1+\frac{\theta}{2(\ln y_n)^{1/4}}\Bigr)\cdot \Bigl(1-\frac{\theta}{2(\ln y_n)^{1/4}}\Bigr)
=
1-\frac{\theta^2}{4\sqrt{\ln y_n}},
\feq
where at the last but one step we used the inequality $e^{-x}\leq 1-\frac{x}{2},$ which is true for any sufficiently small $x>0,$
with $x=\frac{\theta}{(\ln y_n)^{1/4}}.$
\par
Therefore, by the Chernoff-Hoeffding inequality,  for any $t\leq \nu_n,$
\beq
P_{y_n}^{(n)}\bigl(X_t \leq \gamma_n \bigr)&\leq& P_{y_n}^{(n)}\bigl[\mbox{Bin}\bigl(y_n,(1-c_n)^t\bigr)\leq \gamma_n \bigr]
\\
&\leq &
P_{y_n}^{(n)}\bigl[\mbox{Bin}\bigl(y_n,(1-c_n)^{\nu_n}\bigr)\leq \gamma_n \bigr]
\\
&\leq&
\exp\Bigl(-\frac{ y_n(1-c_n)^{\nu_n}\theta^4}{ 48\ln y_n }\Bigr).
\feq
Hence,
$$\sup_{t<\nu_n}P_{y_n}^{(n)}\bigl(X_t \leq \gamma_n \bigr)\leq \exp\Bigl(-\frac{ y_n(1-c_n)^{\nu_n}\theta^4}{ 48\ln y_n }\Bigr).$$
It follows from \eqref{takea} that
\beq
\label{eq:what_i_needed}
\limsup_{n\to\infty} \sup_{t\leq \nu_n}P_{y_n}^{(n)}\bigl(X_t \leq \gamma_n \bigr)\leq e^{-\frac{\beta \theta^4}{48}}.
\feq
Taking in account \eqref{l1} this implies
\beq
\liminf_{n\to\infty} \inf_{t<\nu_n}d_t^{(n)}(0,y_n)\geq 1-e^{-\frac{\beta \theta^4}{48}},
\feq
from which the second claim of the lemma follows.
\end{enumerate}
\end{proof}
In order to obtain easier expressions to work with, we observe that for $\theta$ large enough, independently of $n$, we have
$$ \nu_n(\theta)\ge \frac{\ln y_n-\ln \ln y_n -\theta}{-\ln (1-c_n)}=\frac{\ln y_n-\ln \ln y_n -\theta}{c_n} \times \underset{(*)}{\underbrace{\frac{1}{1+c_n/2+c_n^2/3+\dots }}}.$$
Since $(*) = 1-\frac{c_n}{2}+O(c_n^2)$ and we have that
\begin{align*} \nu_n (\theta)&= t_n - \frac{1}{2}\ln y_n +O(c_n)\ln y_n - \frac{\ln\ln y_n}{c_n} + \frac 12 \ln\ln y_n-O(c_n)\ln\ln y_n -\frac{\theta}{c_n} (1-o(1))\\
	& = t_n -\Bigl(\frac 12 \ln y_n + \frac{\ln\ln y_n}{c_n}\Bigr)+O(c_n)\ln y_n +\frac 12 \ln \ln y_n   - \frac{\theta}{c_n}(1-o(1)),
\end{align*}
and so for every $\theta>0$ and $\epsilon>0$,
$$ \nu_n (\theta)> t_n - (1+\epsilon)\Bigl(\frac 12 \ln y_n + \frac{\ln\ln y_n}{c_n}\Bigr),$$
provided $n$ is large enough.

This leads to the following corollary. Recall that $b_n=(1+\epsilon)\bigl(\frac 12\ln y_n + \frac{\ln\ln y_n}{c_n}\bigr)$.
 \begin{corollary}~
 	\label{cor:bigtheta}
 	Under the assumptions of Theorem \ref{th:cutoff},
 	\label{cuthc}
 	\begin{enumerate}
 		\item $\displaystyle\lim_{\epsilon \to 0}\limsup_{n\to\infty}\sup_{t\ge t_n + \frac{1}{\epsilon c_n} }d^{(n)}_t(0,y_n)=0.$
 		\item For any $\epsilon >0$,  $\displaystyle\lim_{n\to\infty}\inf_{t \le t_n -b_n}d_t^{(n)}(0,y_n)=1.$
 	\end{enumerate}
 \end{corollary}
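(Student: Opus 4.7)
The plan is to derive both assertions directly from Lemma~\ref{cuth} by substituting suitable values of the parameter $\theta$ and invoking the asymptotic expansion for $\nu_n(\theta)$ that is already carried out in the paragraph preceding the corollary. A preliminary observation I would use throughout is that the map $t \mapsto d_t^{(n)}(0,y_n)$ is non-increasing in $t$, a standard contraction property of the Markov semigroup in total variation; consequently every supremum and infimum over a half-line in the statement is attained at the endpoint and there is no real content to the order of quantifiers.

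For claim~(1), given $\epsilon>0$ I would set $\theta = 1/\epsilon$. By the very definition of $\lambda_n(\theta)$,
\[
\lambda_n(1/\epsilon) = \frac{\ln y_n + 1/\epsilon}{c_n} = t_n + \frac{1}{\epsilon c_n},
\]
so the supremum in the corollary coincides with the supremum over $t \ge \lambda_n(1/\epsilon)$ appearing in Lemma~\ref{cuth}(1). Taking $\limsup_{n\to\infty}$ and then letting $\epsilon\to 0$ (equivalently $\theta\to\infty$) yields the first claim.

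For claim~(2) the argument is again to compare half-lines. The algebraic expansion displayed in the paper just before the corollary shows that for every fixed $\epsilon,\theta>0$ one has
\[
\nu_n(\theta) \;>\; t_n - b_n \qquad \text{for all sufficiently large } n;
\]
this uses only the Taylor expansion $-\ln(1-c_n) = c_n(1 + c_n/2 + O(c_n^2))$ together with Assumption~\ref{as:poisson}, and the fact that for large $n$ the term $\epsilon\,\frac{\ln\ln y_n}{c_n}$ dominates $\theta/c_n$. Hence for such $n$,
\[
\inf_{t \le t_n - b_n} d_t^{(n)}(0,y_n) \;\ge\; \inf_{t < \nu_n(\theta)} d_t^{(n)}(0,y_n).
\]
Taking $\liminf_{n\to\infty}$, letting $\theta\to\infty$, and invoking Lemma~\ref{cuth}(2) forces the left-hand side, which lies in $[0,1]$, to equal $1$.

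The corollary is essentially a repackaging of Lemma~\ref{cuth} in coordinates that match the cutoff window announced in Theorem~\ref{th:cutoff}, so no deep obstacle is present. The only genuinely technical step is the Taylor-expansion comparison between $\nu_n(\theta)$ and $t_n-b_n$; this has already been performed explicitly in the paper, so the proof reduces to recording the monotonicity of $d_t^{(n)}$, the two substitutions $\theta = 1/\epsilon$ and $\nu_n(\theta) > t_n-b_n$, and the final (double) passage to the limit.
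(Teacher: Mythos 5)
Your proposal is correct and follows essentially the same route as the paper, which presents Corollary~\ref{cor:bigtheta} as an immediate consequence of Lemma~\ref{cuth}: claim~(1) via the identity $\lambda_n(\theta)=t_n+\theta/c_n$ with $\theta=1/\epsilon$, and claim~(2) via the comparison $\nu_n(\theta)>t_n-b_n$ established in the expansion just before the corollary, together with the uniform-in-$\theta$ bound $\liminf_n\inf_{t<\nu_n(\theta)}d_t^{(n)}(0,y_n)\ge 1-e^{-\beta\theta^4/48}$. The monotonicity of $t\mapsto d_t^{(n)}(0,y_n)$ you invoke is true but not needed, since the inclusion of the relevant half-lines already suffices, as you also note.
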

We are ready to prove Theorem \ref{th:cutoff}.
\begin{proof}[Proof of Theorem \ref{th:cutoff}]
	We begin with the first claim.  Recall that $\alpha_n = 1-c_n(1-p_n)$. Then from the triangle inequality and the Corollary \ref{corol} we obtain
 	\begin{align*}
 d^{(n)}_{t}(y_n,\pi^{(n)})&\le d^{(n)}_t(y_n,0)+d^{(n)}_t(0,\pi^{(n)})\\
  & \le d^{(n)}_t(y_n,0)+\mu_n\alpha_n^t,
 	\end{align*}
 	where $\mu_n = \sum y \pi^{(n)}(y)= \frac{p_n}{c_n (1-p_n)}$. Now $\mu_n \to \beta$, and $\ln (\alpha_n^t) = t \ln (1-c_n (1-p_n)) \le -\frac 12 c_n t$ provided $p_n\le \frac 12$. Therefore
 	$$ \lim_{n\to\infty}\sup_{t>t_n + \frac{1}{\epsilon c_n}}\mu_n \alpha_n^t =0.$$
 	The result now follow from this, combined with the first claim in Corollary \ref{cor:bigtheta}.

	We turn the second claim. Fix $\theta>0$, and recall $\nu_n(\theta)$ from Lemma \ref{cuth}. From the proof of  Lemma \ref{cuth}, it follows that for all $t< \nu_n (\theta)$, $\limsup_{n\to\infty}\sup_{t\le \nu_n(\theta)}P_{y_n}(X_t <\gamma_n(\theta))< e^{-\beta \theta^4/48}$, where $\gamma_n = \gamma_n(\theta)$ was defined in \eqref{eq:gamma}.  Since $t_n -b_n < \nu_n(\theta)$ provided $n$ is large enough, it follows that
	\begin{equation}
 \label{eq:getout}
 \lim_{n\to\infty}\sup_{t\le t_n -b_n}P_{y_n}(X_t\le \gamma_n)=0.
 \end{equation}
	By definition,  $\gamma_n\ge p_n \nu_n (\theta)\to \infty$ as $n\to\infty$, and since
	  $$d_t(y_n, \pi^{(n)})\ge P_{y_n}^{(n)}( X_t>\gamma_n) - \pi^{(n)}\bigl(\{\gamma_n,\gamma_n+1,\dots\}\bigr),$$
	   the tightness of $(\pi^{(n)}:n\in\N)$ along with \eqref{eq:getout} give
	   $$ \lim_{n\to\infty}\sup_{t\le t_n -b_n }d_t(y_n,\pi^{(n)})=1,$$
	   completing the proof.
	\end{proof}
	We conclude this section with the proof of Theorem \ref{limit}
	\begin{proof}[Proof of Theorem \ref{limit}]
		Let $Z_n$ be a random variable distributed according to $\pi^{(n)}$. By Proposition \ref{pr:inv} we can write
		$$Z_n=\sum_{j=0}^\infty B_j(G_j),$$
		where $(G_j:j\in {\mathbb Z}_+)$ are IID $\mbox{Geom}^-(p_n)$, and  $(B_j(k):j,k\in{\mathbb Z}_+)$ are independent with $B_j(k)\sim \mbox{Bin}(k,(1-c_n)^j)$, all independent of the $G_j$'s.
		
		Let $\Lambda (t) = \ln E [ e^{-t Z_n}]$. Then
		$$\Lambda (t) = \sum_{j=0}^\infty \ln E [ e^{-t B_j(G_j)}].$$
		Now
		\begin{align*} E[e^{-t B_j (G_j)}|G_j] & =  \bigl(e^{-t}(1-c_n)^j+(1-(1-c_n)^j)\bigr)^{G_j}\\
		& = \bigl(1- q_n^j (1-e^{-t} )\bigr)^{G_j}\\
		&= e^{-\gamma_{n,j}(t) G_j},
		\end{align*}
		where $q_n=1-c_n$. Therefore
		$$ E[e^{-t B_j(G_j)}]= E[e^{-\gamma_{n,j}(t) G_j}] = \sum_{k=0}^\infty (1-p_n) p_n^{k}e^{-\gamma_{n,j}(t) k}=\frac{1-p_n}{1-p_ne^{-\gamma_{n,j}(t)}}.$$
		
		Thus,
		\begin{align*}\Lambda(t) &= -\sum_{j=0}^\infty \ln \frac{1-p_n(1-(1-e^{-t})q_n^j)}{1-p_n}\\
		&= -\sum_{j=0}^\infty \ln \Bigl(1+\frac{p_n q_n^j}{1-p_n}(1-e^{-t})\Bigr).
		\end{align*}
		For $x \in (0,1)$,
		$$ 0  \le x - \ln (1+x)\le \frac{ x^2} {2}$$
		
		Therefore,
		\begin{equation}
		\label{eq:twosided_poisson}0\le \underset{(I)}{\underbrace{\sum_{j=0}^\infty \frac{p_n q_n^j}{1-p_n}(1-  e^{-t})}}  +\Lambda(t) \le \underset{(II)}{\underbrace{
				\frac {p_n^2(1-e^{-t})^2}{2(1-p_n)^2}\sum_{j=0}^\infty q_n^{2j}}}.
		\end{equation}
		Next,
		$$(I)  = \frac{p_n}{(1-p_n)c_n}(1-e^{-t})\underset{n\to\infty}{\to} \beta (1-e^{-t}),$$
		and since  $\sum_{j=0}^\infty q_n^{2j}\le \sum_{j=0}^\infty q_n^j = \frac{1}{c_n}$,
		$$(II) \le p_n \frac{p_n}{(1-p_n)^2c_n}=p_n \beta O(1)\underset{n\to\infty}{\to} 0.$$
		We have thus proved that $\lim_{n\to\infty}\Lambda(t) = -\beta (1-e^{-t})$.
	\end{proof}
\section{Additional Topics}
\subsection{Branching process representation}
\label{branch}
We adopt a scheme of Key \cite{Key} for general branching process with immigration in random environment to give a probabilistic interpretation of the particular instance of Neuts' formula \cite{Neuts}.
Using the approach of \cite{Artalejo} we compute the generating function of the extinction time in Section~\ref{subs-ext}.

The process $X$ can be thought of as a branching process with immigration in random environment.
Branching process have been used to model growth of a population subject to random catastrophes by many authors (see, for instance, a comprehensive literature review in \cite{Kapodistria}), the idea goes back to at least \cite{Kaplan} where a branching process in random environment (without immigration) was considered. In this section we use a branching representation of our process and Key's \cite{Key} representation of its stationary
distribution for several purposes. First, it yields Lemma~\ref{etau} below stating that the extinction time $\tau$ has exponential tails, next it provides an illuminating probabilistic representation of the invariant distribution $\pi$ for our process, including the extreme case of rare but nearly total catastrophes (see the discussion after Proposition~\ref{zin} and Theorem~\ref{rare} below).
\par
Let
\beqn
\label{omega}
\omega_t=\left\{
\begin{array}{lcl}
	1&\mbox{\rm if}& \mbox{\rm a birth event occurs at time}~t\\
	0&\mbox{\rm if}& \mbox{\rm a catastrophe occurs at time}~t\\
\end{array}
\right.
\feqn
We refer to the sequence $\omega:=(\omega_t)_{t\in\zz_+}$ as a \textit{random environment}. We denote the distribution of the environment by $\pp,$ the law of the process conditional on the environment by $P_\omega,$ and the corresponding expectation by $E_\omega.$
\par
The Markov process $X$ can be described using the following branching equation:
\beqn
\label{key}
X_{t+1}=\sum_{k=1}^{X_t+I_t} U_{t,i}=\sum_{k=1}^{X_t} U_{t,i}+I_t,
\feqn
where $I_t=\omega_t$ is interpreted as the number of immigrants joining the system at generation $t$ and $U_{t,i}$ as the number of progeny of $i$-th particle living at generation $t.$ Under the probability law conditional on the environment $\omega_t,$ $U_{t,i}$ are independent Bernoulli variables with parameter $c_t:=\omega_t+(1-\omega_t)(1-c)$ which are independent of $X_t:$
\beq
P_\omega(U_{t,i}=1)=c_t\qquad  \mbox{\rm and}\qquad  P_w(U_{t,i}=0)=1-c_t.
\feq
In statistical applications, this special type of branching processes with Bernoulli reproduction mechanism is often referred to as a RCINAR(1) \emph{random coefficient integer-valued autoregressive process of order one} \cite{rcinar}. In this context, \eqref{key} is written as
\beq
X_{t+1}=(1-c_t)*X_t+I_t,\qquad t\in\zz_+,
\feq
where $(1-c_t)*$ describes the action of a \textit{binomial thinning operator} \cite{McKenzie,Weis}.
\par
Stationary distribution of branching processes with immigration in a random environment, in a general (and, in fact, multi-type) setting, was studied in \cite{Key}. In particular, it follows from results in \cite{Key} that random variable $\tau$ has exponential distribution tails (in order to deduce this, one may replace $I_t$ by 1 in \eqref{key} to be able to formally use Theorem~4.2 in \cite{Key}, and then apply a stochastic dominance argument). We state it formally as
\begin{lemma}
	\label{etau}
	There exists $a,b>0$ such that $P_0(\tau>t)\leq ae^{-bt}$ for any $t\geq 0.$
\end{lemma}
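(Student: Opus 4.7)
My plan is to follow the hint stated in the paper: transfer the conclusion from Theorem~4.2 of \cite{Key}, applied to a dominating branching process with deterministic immigration, via stochastic dominance. A more elementary Foster--Lyapunov route is available as backup and in fact executes in parallel.

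First, I would construct a dominating process. In the branching representation \eqref{key} the immigration variable satisfies $I_t=\omega_t\in\{0,1\}$. Define $(\tilde X_t)$ by the same recursion but with $\tilde I_t\equiv 1$, using a shared environment $\omega$ and a shared collection $(U_{t,i})$ of offspring Bernoullis. The obvious coupling yields $X_t\le \tilde X_t$ pathwise, and $\tilde X$ fits the single-type branching-with-deterministic-immigration-in-a-random-environment model of \cite{Key}, with annealed offspring mean $E[c_t]=p+(1-p)(1-c)=\alpha<1$.

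Next, Theorem~4.2 of \cite{Key} gives a regeneration-based representation for the stationary distribution $\tilde\pi$ of $\tilde X$ and, in the subcritical regime $\alpha<1$, shows that $\tilde\pi$ has an exponential moment $\int\rho^x\,d\tilde\pi(x)<\infty$ for some $\rho>1$. Combined with the direct one-step computation
\[
E[\rho^{X_{t+1}}\mid X_t=x] = p\rho^{x+1}+(1-p)\bigl(c+(1-c)\rho\bigr)^x,
\]
one checks that, for $\rho>1$ sufficiently close to $1$, the function $V(x)=\rho^x$ satisfies a geometric Foster--Lyapunov drift $E[V(X_{t+1})\mid X_t=x]\le \gamma V(x)+b\mathbf{1}_C(x)$ with $\gamma<1$, $b<\infty$, and $C\subset\zz_+$ finite. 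Indeed, writing $\phi(\rho):=c+(1-c)\rho$, we have $\phi(1)=1$ and $\phi'(1)=1-c$, so $\phi(\rho)/\rho\to 1-c<1$ as $\rho\downarrow 1$, while $p\rho\to p<1$; hence the right-hand side is strictly less than $\rho^x$ outside a finite set.

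Finally, the Meyn--Tweedie theorem (equivalently, Kendall's theorem on geometrically ergodic Markov chains) converts this geometric drift into an exponential tail for the return time to $\{0\}$, uniformly over the starting state and in particular from $0$. This yields $P_0(\tau>t)\le a e^{-bt}$ for suitable constants $a,b>0$. The delicate point is extracting the exponential moment of $\tilde\pi$ from Key's representation in our specific environment; if that turns out awkward, the one-line drift computation above already suffices to invoke Meyn--Tweedie on its own, so the branching picture can serve merely as motivation for the choice of Lyapunov function rather than as an essential ingredient.
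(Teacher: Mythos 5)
Your primary route is exactly the paper's: the paper justifies this lemma only by a one-line remark --- replace $I_t$ by $1$ in \eqref{key} so that Theorem~4.2 of \cite{Key} applies, then conclude by stochastic dominance --- and your coupling with $\tilde I_t\equiv 1$ (shared environment, shared offspring variables) is precisely the formalization of that remark. What you add, and what is genuinely different, is the self-contained Foster--Lyapunov argument, which the paper does not carry out and which in fact closes the very gap you flag at the end: there is no need to extract an exponential moment of $\tilde\pi$ from Key's regeneration representation, because the one-step identity $E[\rho^{X_{t+1}}\mid X_t=x]=p\rho^{x+1}+(1-p)\bigl(c+(1-c)\rho\bigr)^x$ gives, with $\phi(\rho):=c+(1-c)\rho$, the ratio $E[\rho^{X_{t+1}}\mid X_t=x]/\rho^x=p\rho+(1-p)\bigl(\phi(\rho)/\rho\bigr)^x\to p\rho<1$ as $x\to\infty$ for any fixed $\rho\in(1,1/p)$, since $\phi(\rho)<\rho$ for $\rho>1$; this yields the geometric drift outside a finite set, and Meyn--Tweedie together with the irreducibility and aperiodicity recorded in Section~\ref{intro} then gives $E_x[\kappa^{\tau}]<\infty$ for some $\kappa>1$, hence the claimed exponential tail from $x=0$. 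One small slip in your justification: $\phi(\rho)/\rho\to 1$, not $1-c$, as $\rho\downarrow 1$; what you actually need, and what is true, is that $\phi(\rho)/\rho<1$ for each fixed $\rho>1$ (equivalently, the derivative of $\phi(\rho)/\rho$ at $\rho=1$ equals $-c<0$). The trade-off between the two routes: the citation argument is shorter on the page but leans on \cite{Key} and on the dominance step, whereas your drift computation is elementary, self-contained, and immediately quantitative.
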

We next consider a branching process obtained from $X$ by sampling at the times when catastrophes occur. This auxiliary process has a slightly simpler
structure than the underlying process $X.$ We use it below to obtain an alternative probabilistic representation of the stationary distribution of $X.$

 Let $T_0=0$ and
\beqn
\label{tin}
T_n=\inf\{k>T_{n-1}:\omega_k=0\}.
\feqn

Observe that the sequence $(T_{n}-T_{n-1}:n\ge 1)$ is an IID sequence of $\mbox{Geom}(1-p)$ random variables.
Let $Z_n=X_{T_n}$ and $Z:=(Z_n)_{n\in\zz_+}.$
\begin{proposition}
	\label{zin}
	The Markov chain $Z$ has a unique stationary distribution $Z_\infty,$ whose generating function is given by
	\beq
	E[s^{Z_\infty}]
	=\prod_{k=1}^\infty \frac{1-p}{1-p\bigl(s(1-c)^k+1-(1-c)^k\bigr)},
	\qquad s\in [0,1].
	\feq
	Thus, in the language of Proposition~\ref{pr:inv}, $Z_\infty=R-R_0=\sum_{j=1}^\infty Bin_j\bigl(R_n,(1-c)^j\bigr).$
\end{proposition}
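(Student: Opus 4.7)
The plan is to derive the one-step recursion for the sampled chain $Z$, guess and verify that $R-R_0$ satisfies the resulting fixed-point equation via Lemma~\ref{lem:binom}, and then read off its generating function.

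First I would describe the one-step transition of $Z$. Between consecutive catastrophes the process undergoes $G_n := T_n - T_{n-1} - 1$ unit birth increments, with $G_n \sim \mbox{Geom}^-(1-p)$ independent of the chain history (the $\omega_t$'s are i.i.d.\ Bernoulli$(p)$), followed by the catastrophe itself, which is a binomial thinning with survival probability $1-c$. Sampling at the post-catastrophe state yields the recursion
$$Z_n \eqd \mbox{Bin}(Z_{n-1} + G_n,\, 1-c),$$
with $G_n$ independent of $Z_{n-1}$. Irreducibility and aperiodicity on $\zz_+$ are immediate: from any state $z$, in one step we can reach any $z' \in \zz_+$ with positive probability (use the tail of $G_n$ to provide $z+g \ge z'$ individuals, then thin down to exactly $z'$).

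Second, I would guess $Z_\infty := R - R_0 = \sum_{j=1}^\infty \mbox{Bin}_j(R_j,(1-c)^j)$ as the stationary law and check the fixed-point relation $Z_\infty \eqd \mbox{Bin}(Z_\infty + G,\, 1-c)$ with $G \sim \mbox{Geom}^-(1-p)$ independent of $Z_\infty$. By construction $R_0$ is independent of $R - R_0 = Z_\infty$, so identifying $G$ with $R_0$ gives $Z_\infty + G \eqd R$ (the $j=0$ summand in $R$ is just $R_0$). Applying Lemma~\ref{lem:binom} with $\veps = 1-c$ then gives
$$\mbox{Bin}(R,\, 1-c) \eqd \sum_{j=0}^\infty \mbox{Bin}_j(R_j,(1-c)^{j+1}) \eqd \sum_{j=1}^\infty \mbox{Bin}_j(R_j,(1-c)^j) = Z_\infty,$$
where the reindexing $j+1 \to j$ in the last step preserves the joint law because the $R_j$ are i.i.d. So $Z_\infty$ is invariant under the one-step transition of $Z$, and uniqueness follows because irreducibility combined with the existence of a stationary distribution forces positive recurrence on a countable state space.

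Finally, the generating function factorizes by independence of the summands in $R-R_0$, and each factor is computed via \eqref{eq:laplace_shifted_geom} with $\alpha = 1-p$ at argument $(1-c)^j s + 1 - (1-c)^j$:
$$E[s^{Z_\infty}] = \prod_{j=1}^\infty E\bigl[((1-c)^j s + 1-(1-c)^j)^{R_j}\bigr] = \prod_{j=1}^\infty \frac{1-p}{1-p\bigl((1-c)^j s + 1-(1-c)^j\bigr)}.$$
The only delicate point is bookkeeping at the sampling times: the clean recursion $Z_n \eqd \mbox{Bin}(Z_{n-1}+G_n,1-c)$ arises from recording the state \emph{after} the catastrophe rather than before it, and this choice is precisely what produces the shift from $R$ to $R-R_0$ (equivalently, makes the product start at $j=1$ rather than $j=0$).
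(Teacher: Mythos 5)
Your proof is correct. The first step --- identifying the sampled chain with the branching recursion $Z_{n} \eqd \mbox{Bin}(Z_{n-1}+G_n,1-c)$ driven by $\mbox{Geom}^-(1-p)$ immigration --- is exactly the paper's starting point (its identity \eqref{zib}). Where you diverge is in how stationarity is established: the paper simply invokes Theorem~4.2 of Key \cite{Key} on branching processes with immigration in a random environment, which delivers existence, uniqueness, and the representation of $Z_\infty$ in one stroke, whereas you verify the fixed-point equation $Z_\infty\eqd\mbox{Bin}(Z_\infty+G,1-c)$ directly via Lemma~\ref{lem:binom} and the reindexing $j\mapsto j+1$ (legitimate since the $R_j$ are IID), and then obtain uniqueness from irreducibility together with the standard fact that an irreducible countable-state chain admitting a stationary probability measure is positive recurrent. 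Your route is self-contained and elementary, and it closely mirrors the paper's own proof of Proposition~\ref{pr:inv}; what the citation of Key buys the authors, beyond brevity, is the genealogical representation \eqref{minus} of $Z_\infty$ as the time-zero population of a process started at $-\infty$, which they exploit in the discussion following the proposition. One small caveat: your irreducibility argument needs $c<1$ (for $c=1$ the chain is absorbed at $0$ after one step and the statement is trivial), so that degenerate case should be noted separately.
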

\begin{proof}
	Considering $R_t$ as an immigration process,  $Z_t$ can be constructed as a branching process with immigration governed by the following branching identity:
	\beqn
	\label{zib}
	Z_{t+1}=\sum_{k=1}^{Z_t+R_t} V_{t,k},\qquad t\in\zz_+,
	\feqn
	where $V_{t,k}$ are IID Bernoulli random variables, independent of the immigration process and $Z_0,$ such that
	\beq
	P(V_{t,k}=1)=1-c\qquad \mbox{\rm and}\qquad P(V_{t,k}=0)=c.
	\feq
	The result thus follows from Theorem~4.2 in \cite{Key}.
\end{proof}
We remark that an auxiliary process similar to our $(Z_n)_{n\in\zz_+}$ has been used, for instance, in \cite{Economou,Kaplan} to derive
the stationary distribution for different models with catastrophes.
\par
Following the representation of the stationary distribution in \cite{Key}, one can write
\beqn
\label{minus}
Z_\infty=\lim_{t\to\infty} \sum_{k=-t}^{-1} Z_{k,0}=\sum_{k=-\infty}^{-1} Z_{k,0},
\feqn
where $Z_{k,0}\samed \mbox{Bin}_{|k|}\bigl(R_{|k|},(1-c)^{|k|}\bigr)$ is the number of descendant  alive at time zero of a ``demo" immigrant arrived at time $k<0.$ Heuristically, in this representation $Z_\infty$ is the population at time zero of a branching process that starts at minus infinity
\cite{Key}. In between two regeneration times $T_n,$ the process goes up $\mbox{Geom}^-(1-p)$ number of times. When one observe the original chain in the stationary regime, time-wise the chain is in a random place between two random times $T_n.$ This suggests (using the key renewal theorem) that the stationary distribution of the original Markov chain should be the convolution of $Z_\infty$ and an independent  $\mbox{Geom}^-(1-p)$ variable. The result is formally confirmed in Proposition~\ref{zin}.
	We conclude this section with a brief discussion of the case of ``severe but rare" catastrophes. For a biological motivation of this regime see, for instance, \cite{Huillet,Lande,rare,rare1,VE}. Specifically, a sequence of parameters $(p_n,c_n)$ such that $p_n\to 1,$ $c_n\to 1$ as $n\to\infty,$
	and $\lim_{n\to\infty} \frac{1-c_n}{1-p_n}=\beta$ for some $\beta.$ We will denote the stationary distribution for the $n$-th model, given by Proposition~\ref{pr:inv}, by $R^{(n)}.$ Observe that
	\beqn
	\label{lpi}
	E[s^R] =\prod_{k=0}^\infty \frac{1-p}{1-p\bigl(s(1-c)^k+1-(1-c)^k\bigr)}.
	\feqn
	 With this, it is not hard to verify the following result:
	\begin{theorem}
		\label{rare}
		$R^{(n)}=R_0+A_n,$ where $A_n$ is independent of $R_0$ and converges in distribution, as $n\to\infty,$ to $\mbox{Poiss}(\beta).$
	\end{theorem}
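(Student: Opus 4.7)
The plan is to mimic the proof of Theorem \ref{limit}: derive the probability generating function of $A_n$ in closed form from Proposition \ref{pr:inv}, pass to the $n\to\infty$ limit, and identify the result with the PGF of $\mbox{Poiss}(\beta)$, concluding via L\'evy's continuity theorem in its PGF form for $\zz_+$-valued random variables.

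First, Proposition \ref{pr:inv} applied to parameters $(p_n,c_n)$ directly furnishes the decomposition
\[ R^{(n)} = R_0 + A_n, \qquad A_n := \sum_{j=1}^\infty \mbox{Bin}_j\bigl(R_j,(1-c_n)^j\bigr), \]
where $R_0,R_1,\dots$ are IID $\mbox{Geom}^-(1-p_n)$ and the binomial thinnings are conditionally independent given them. The independence of $R_0$ and $A_n$ is therefore immediate, and the first assertion of the theorem is free.

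Next, factoring off the $k=0$ term in \eqref{lpi} (which is exactly $E[s^{R_0}]$ by \eqref{eq:laplace_shifted_geom}) yields
\[ \ln E\bigl[s^{A_n}\bigr] = -\sum_{k=1}^\infty \ln\!\Bigl(1 + \beta_n (1-c_n)^{k-1}(1-s)\Bigr), \qquad s\in[0,1], \]
where $\beta_n := p_n(1-c_n)/(1-p_n) \to \beta$ and $1-c_n \to 0$ under the assumed scaling. The core step is to pass to the limit in this series. Splitting off $k=1$ (which converges pointwise to $-\ln(1+\beta(1-s))$) from the tail $k\ge 2$, one can control the tail using $\ln(1+x)\le x$ and a geometric sum by
\[ 0\le\sum_{k=2}^\infty \ln\!\bigl(1+\beta_n(1-c_n)^{k-1}(1-s)\bigr)\le \beta_n(1-s)\frac{1-c_n}{c_n}\;\longrightarrow\;0. \]
Identifying the resulting limit with the log-PGF $\beta(s-1)$ of $\mbox{Poiss}(\beta)$ and invoking L\'evy's continuity theorem for pointwise convergence of PGFs on $[0,1]$ completes the proof.

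The main obstacle I anticipate lies in the identification of the limiting PGF with that of $\mbox{Poiss}(\beta)$: unlike the regime of Theorem \ref{limit}, where every factor of the product is close to $1$ and the quadratic Taylor remainder is negligible, here the individual factors $1+\beta_n(1-c_n)^{k-1}(1-s)$ are of order one for small $k$, so the log-series does not simply collapse to a Poisson cumulant and the scaling constants must be tracked with care. A possible alternative route, should the direct PGF manipulation prove stubborn, is to exploit the branching identity \eqref{zib} for $Z_\infty^{(n)}=R^{(n)}-R_0^{(n)}$ and pass to the limit in the fixed-point equation $Z_\infty^{(n)}\eqd\mbox{Bin}(Z_\infty^{(n)}+R,1-c_n)$, using the classical Poisson approximation to the binomial with many trials and vanishing success probability.
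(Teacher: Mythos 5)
Your decomposition $R^{(n)}=R_0+A_n$ and the independence claim are read off Proposition~\ref{pr:inv} exactly as in the paper, and your computation of the limiting log-PGF is correct: after factoring off the $k=0$ factor of \eqref{lpi}, the $k=1$ factor tends to $\bigl(1+\beta(1-s)\bigr)^{-1}$ and the tail $k\ge 2$ tends to $1$ by your geometric-sum bound, so $\ln E[s^{A_n}]\to-\ln\bigl(1+\beta(1-s)\bigr)$. The genuine gap is the final step: $-\ln\bigl(1+\beta(1-s)\bigr)$ is \emph{not} equal to $\beta(s-1)$. It is the log-PGF of $\mbox{Geom}^-\bigl(\tfrac{1}{1+\beta}\bigr)$ (apply \eqref{eq:laplace_shifted_geom} with $\alpha=\tfrac{1}{1+\beta}$), a shifted geometric with mean $\beta$, and for $\beta>0$ this differs from the Poisson log-PGF (e.g.\ the mass at $0$ is $\tfrac{1}{1+\beta}$ versus $e^{-\beta}$). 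The ``identification'' you defer to the end cannot be carried out, and the obstacle you flag is not a technicality: under the stated scaling the $j=1$ summand of $A_n$ is $\mbox{Bin}_1\bigl(R_1,1-c_n\bigr)$, a binomial thinning of a shifted geometric, which is \emph{exactly} $\mbox{Geom}^-$ with mean $p_n(1-c_n)/(1-p_n)\to\beta$, while the summands $j\ge 2$ vanish in probability. No Poissonization occurs because the number of trials $R_1$ is not concentrated; your proposed fallback via $\mbox{Bin}\bigl(Z^{(n)}_\infty+R,1-c_n\bigr)$ hits the same wall, since the law of rare events needs a deterministic (or at least concentrated) number of trials.

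For what it is worth, your calculation is more careful than the paper's own proof, which applies $x-\tfrac{x^2}{2}\le\ln(1+x)\le x$ to $x_n(k)=p_n(1-c_n)^k/(1-p_n)$ and disposes of the quadratic remainder via $\sum_{k\ge1}x_n(k)^2\le x_n(1)\sum_{k\ge1}x_n(k)\to 0\cdot\beta$; but $x_n(1)\to\beta\neq 0$ under the stated scaling, so that remainder does not vanish and the paper's argument does not close either. Unless $\beta=0$ (when both candidate limits degenerate to $\delta_0$), the distributional limit of $A_n$ is the shifted geometric with mean $\beta$, not $\mbox{Poiss}(\beta)$. So do not try to force the identification; the correct conclusion, which your argument does essentially prove, is the geometric limit.
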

	\begin{proof}
		Recall \eqref{lpi}, and set $x_n(k):=\frac{p_n(1-c_n)^k}{1-p_n},$ $k\in\zz_+,$ $n\in\nn,$ so that
		\beq
		\ln E\bigl[s^{R^{(n)}}\bigr]=-\sum_{k=0}^\infty \ln\bigl(1+x_n(k)(1-s)\bigr),\qquad s\in[0,1].
		\feq
		To estimate the right-hand side, one can apply to $x_n(k)$ the inequality $x-\frac{x^2}{2}\leq \ln(1+x)\leq x$ which is true for all $x>0$ sufficiently small (and hence, uniformly on $k,$
		for all $x_n(k)$ with $n$ large enough). The result follows from the fact
		\beq
		\sum_{k=1}^\infty x_n(k)=\frac{p_n(1-c_n)}{(1-p_n)c_n} \to \beta, \qquad \mbox{\rm as} ~n\to\infty,
		\feq
		and
		\beq
		\sum_{k=1}^\infty \bigl(x_n(k)\bigr)^2\leq x_n(1)\cdot \sum_{k=1}^\infty x_n(k) \to 0\cdot \beta=0, \qquad \mbox{\rm as} ~n\to\infty.
		\feq
		where we took in account that $x_n(k)$ is monotone decreasing on $k.$ Thus $\ln E\bigl[s^{R^{(n)}}\bigr]$ converges, as $n\to\infty,$
		to $-\beta(1-s),$ and the proof of the theorem is complete.
	\end{proof}
	Note that in view of Proposition~\ref{zin}, $\mbox{Poiss}(\beta)$ is the limit in distribution of $Z_\infty.$ Furthermore,
	using \eqref{minus} and a similar representation for the underlying branching process $X,$ one can by virtue of the renewal theorem interpret $-R_0$ as the time of the last catastrophe before time zero and $A_n$ as the distribution of the population right after the last catastrophe in the stationary branching process $(X_t)_{t\in\zz}.$
\subsection{First Extinction Time}
\subsubsection{Overview}
In this section we discuss the following two aspects related to the first extinction time $\tau:$
\begin{itemize}
\item Asymptotic behavior of $\tau$ under large initial population.
\item  Generating function for $\tau$.
\end{itemize}
\subsubsection{Asymptotic for large population}
\label{sec:asymp}
In this section we discuss the asymptotic behavior of the first extinction time $\tau$ when the process starts from a large population.To do that we will use the coupling construction of Section \ref{sec:coupling}.
Consider the processes $X_t^{(0)}$ and $X_t^{(n)}$ with initial populations $0$ and $n$, respectively. From our coupling we know that for every $t\geq 0$ we have
$$X_t^{(n)}=X_t^{(0)}+H_t^{(n)}.$$

 Let $\tau^{(n)}$ and $\xi^{(n)}$ be the hitting time of $0$ by $X^{(n)}$ and $H^{(n)}$, respectively:
$$ \tau^{(n)} = \inf\{t\ge: X^{(n)}_t =0\},~\xi^{(n)} = \inf\{t\ge 0:H^{(n)}_t=0\}.$$
Then $\tau^{(n)},\xi^{(n)}$ are both nondecreasing.

Let $T_0=0$ and let $T_1,T_2,\dots$ be the increasing sequence of times $X^{(0)}$ visits $0$. Then clearly,
$$\tau^{(n)} = \inf\{T_k: T_k \ge \xi^{(n)}\}.$$

This is because $X^{(n)}_t=0$  if and only if $H^{(n)}_t=0$ and $X^{(0)}_t=0$. Now let $\rho^{(n)} = \tau^{(n)} - \xi^{(n)}$. Then $\rho^{(n)}$ depends on the past of the coupled system only through the size of the population $X^{(n)}_{\xi^n}$. Thus its distribution coincides with the distribution of $\tau^{(X^0_{\xi^n})}$. By ergodicity of $X^{(0)}$, and the fact that $\xi^{(n)} \nearrow  \infty$ a.s. as $n\to\infty$, it follows that $\rho^{(n)}$  converges weakly to the distribution of $\tau$, the hitting time of $0$ under $\pi$. We have proved the following:
\begin{proposition}
	\label{pr:taun_asymp}
	$\tau^{(n)} -\xi^{(n)}$ converges in distribution to $P_{\pi}(\tau \in \cdot)$ as $n\to\infty.$
\end{proposition}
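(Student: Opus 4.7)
The plan is to use the coupling of Section~\ref{sec:coupling} combined with the strong Markov property, reducing the assertion to showing that $X^{(0)}_{\xi^{(n)}}$ converges in distribution to $\pi$ as $n\to\infty$. Once this is established, the conclusion follows because, conditional on $X^{(0)}_{\xi^{(n)}}=x$, the difference $\rho^{(n)} := \tau^{(n)}-\xi^{(n)}$ is distributed as the hitting time of $0$ under $P_x$, so the unconditional law of $\rho^{(n)}$ is the mixture over $x$ of $P_x(\tau\in\cdot)$ weighted by the law of $X^{(0)}_{\xi^{(n)}}$.

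To execute this, I would first realize the couplings consistently across all $n$ by fixing a single environment $(\omega_t)$ and a nested family of thinnings so that $H^{(n_1)}_t\le H^{(n_2)}_t$ whenever $n_1<n_2$. With this choice $\xi^{(n)}$ is non-decreasing in $n$, and \eqref{eq:H_cond} gives $P(\xi^{(n)}\le t)=E[(1-(1-c)^{N_t})^n]\to 0$ for each fixed $t$, so $\xi^{(n)}\nearrow\infty$ almost surely. Since $\xi^{(n)}$ is a stopping time for the joint filtration and $X^{(n)}_{\xi^{(n)}}=X^{(0)}_{\xi^{(n)}}$ (because $H^{(n)}_{\xi^{(n)}}=0$), the strong Markov property implies that, on the event $\{X^{(0)}_{\xi^{(n)}}=x\}$, the shifted process $(X^{(0)}_{\xi^{(n)}+s})_{s\ge 0}$ is a Markov chain started at $x$; consequently
\[ P(\rho^{(n)}\in B)=\sum_{x\in\Z_+} P(X^{(0)}_{\xi^{(n)}}=x)\, P_x(\tau\in B). \]

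The main step is therefore to establish that $X^{(0)}_{\xi^{(n)}}$ converges in distribution to $\pi$. Heuristically this follows from $\xi^{(n)}\nearrow\infty$ together with the exponential ergodicity $\|P_0(X^{(0)}_t\in\cdot)-\pi\|_{TV}\le \mu\alpha^t$ from Corollary~\ref{corol}: if $\xi^{(n)}$ were independent of $X^{(0)}$, dominated convergence would immediately yield $\|P(X^{(0)}_{\xi^{(n)}}\in\cdot)-\pi\|_{TV}\le \mu\, E[\alpha^{\xi^{(n)}}]\to 0$. The genuine difficulty, and the main obstacle of the proof, is that $\xi^{(n)}$ and $X^{(0)}$ are not independent, since both are functions of the common environment $(\omega_t)$. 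I would handle this by conditioning on $(\omega_t)$: given the environment, $\xi^{(n)}$ (a function only of the $H$-thinnings) is conditionally independent of $X^{(0)}$ (a function only of the $X$-thinnings), and this conditional independence combined with the uniform total-variation bound is the essential tool for closing the argument. Once $X^{(0)}_{\xi^{(n)}}$ is shown to converge in distribution to $\pi$, the displayed identity together with dominated convergence yields $E[f(\rho^{(n)})]\to E_\pi[f(\tau)]$ for every bounded $f$, completing the proof.
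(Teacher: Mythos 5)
Your overall architecture is the same as the paper's: couple $X^{(n)}$ with $X^{(0)}$, set $\rho^{(n)}=\tau^{(n)}-\xi^{(n)}$, apply the strong Markov property at $\xi^{(n)}$ to identify the law of $\rho^{(n)}$ as the mixture $\sum_x P\bigl(X^{(0)}_{\xi^{(n)}}=x\bigr)P_x(\tau\in\cdot)$, and reduce everything to the convergence of $X^{(0)}_{\xi^{(n)}}$ to $\pi$. You have also correctly located the delicate point, namely that $\xi^{(n)}$ and $X^{(0)}$ are not independent; the paper's proof passes over this in one sentence (``by ergodicity of $X^{(0)}$ and the fact that $\xi^{(n)}\nearrow\infty$'').

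Your proposed repair, however, does not close the gap. Conditioning on the environment does make $\xi^{(n)}$ and $X^{(0)}$ conditionally independent, but it replaces the annealed law $P_0(X_t\in\cdot)$ by the quenched law $P_\omega(X_t\in\cdot)$, and the total variation bound of Corollary~\ref{corol} is an annealed statement: for a fixed environment $P_\omega(X_t\in\cdot)$ does not approach $\pi$ (on the event of no catastrophes up to time $t$ one has $X^{(0)}_t=t$ deterministically). So ``conditional independence plus the uniform TV bound'' leaves you exactly with the correlation, through $\omega$, between $P_\omega(\xi^{(n)}=t)$ (a function of the catastrophe counts) and $P_\omega(X^{(0)}_t\in\cdot)$, which is the whole difficulty. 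Worse, there is a persistent bias that no such argument can remove: since $H^{(n)}$ can only decrease at catastrophe steps, $\xi^{(n)}$ is almost surely a catastrophe time, so $X^{(0)}_{\xi^{(n)}}$ is always observed immediately after a binomial thinning. Its natural limit is therefore the post-catastrophe stationary law $Z_\infty$ of Proposition~\ref{zin}, not $\pi$; in particular $P\bigl(X^{(0)}_{\xi^{(n)}}=0\bigr)$ should not tend to $\pi(0)$. The degenerate case $c=1$ makes this vivid: there $\xi^{(n)}$ is the first catastrophe time, $X^{(0)}_{\xi^{(n)}}=0$ and $\rho^{(n)}\equiv 0$, while $P_\pi(\tau=0)=0$. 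So the step ``$X^{(0)}_{\xi^{(n)}}$ converges to $\pi$'' is not merely unproved in your sketch --- it is where the argument (and, as written, the paper's own proof) actually breaks, and any correct completion must account for the fact that $X^{(0)}$ is being sampled at a catastrophe time.
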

It follows from \eqref{eq:zetad} that
$$ P(\xi^n \le  t )=E [( 1-(1-c)^{N_t})^n].$$

 Let $\epsilon \in (0,1/2)$, and let $A_t =\{|N_t/t -(1-p)|<\epsilon\}$. Then by the Law of Large Numbers $P(A_t)\to 1$. We have the following two-sided bounds:
\begin{align}
\label{eq:twosidedtail}
E[ (1-(1-c)^{(1-\epsilon)(1-p)t })^n,A_t ] &\le  E [(1-(1-c)^{N_t})^n] \\
\nonumber
 & \le E [ (1-(1-c)^{t(1+\epsilon)(1-p)})^n]+P(A_t^c).
\end{align}
Let
$$d_n = -\frac{\ln n}{(1-p)\ln (1-c)}.$$
If  $t\le  (1-\epsilon)d_n$, then  it follows from the second inequality in \eqref{eq:twosidedtail} that
$$ P(\xi^{(n)} \le t )\le  \bigl(1-n^{-(1-\epsilon^2)}\bigr)^n \bigl(1+o(1)\bigr)+o(1)\to 0,$$
while if  $t\ge (1+2\epsilon)d_n$, it follows from the first inequality in \eqref{eq:twosidedtail} that
$$ P(\xi^{(n)} \le t ) \ge \bigl(1-n^{-(1+2\epsilon)(1-\epsilon)}\bigr)^n \bigl(1+o(1)\bigr)\to 1.$$

Thus $\xi^{(n)} / d_n \to 1$ in probability. This, and Proposition \ref{pr:taun_asymp} give
\begin{proposition} $\tau^{(n)} / d_n \to 1$ in probability as $n\to\infty.$
\end{proposition}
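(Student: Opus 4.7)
The plan is to decompose
\[
\frac{\tau^{(n)}}{d_n}=\frac{\xi^{(n)}}{d_n}+\frac{\tau^{(n)}-\xi^{(n)}}{d_n}
\]
and show that the first summand converges to $1$ in probability while the second vanishes in probability. The first convergence has already been established in the paragraph preceding the proposition via the two--sided Chernoff/LLN bound \eqref{eq:twosidedtail}, so nothing new is required there; I would simply quote it.

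For the second summand, the strategy is to use Proposition~\ref{pr:taun_asymp}, which asserts that $\tau^{(n)}-\xi^{(n)}$ converges in distribution to $P_\pi(\tau\in\cdot)$. The key observation is that the limit law is a \emph{proper} probability distribution on $\Z_+$: indeed, by Lemma~\ref{etau} we have $P_0(\tau>t)\le ae^{-bt}$, and a first--step decomposition (or the geometric ergodicity of $\mathfrak p$ together with the fact that $\pi$ has finite mean, see \eqref{exp}) shows that $P_\pi(\tau<\infty)=1$. Consequently the sequence $(\tau^{(n)}-\xi^{(n)})_{n\ge 1}$ is tight. Since $d_n\to\infty$, tightness forces $(\tau^{(n)}-\xi^{(n)})/d_n\to 0$ in probability.

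Combining the two pieces by Slutsky's theorem yields $\tau^{(n)}/d_n\to 1$ in probability, which is the claim. I do not expect any real obstacle here: the heavy lifting was done in Proposition~\ref{pr:taun_asymp} and in the LLN-based analysis of $\xi^{(n)}/d_n$; the only minor point to verify is the a.s.\ finiteness of the limit law, which is immediate from Lemma~\ref{etau} and ergodicity. The proof should be only a few lines long.
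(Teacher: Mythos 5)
Your proposal is correct and follows exactly the paper's route: the paper also deduces the proposition by combining the already-established convergence $\xi^{(n)}/d_n\to 1$ in probability (from the two-sided bound \eqref{eq:twosidedtail}) with Proposition~\ref{pr:taun_asymp}, the convergence in distribution of $\tau^{(n)}-\xi^{(n)}$ to the proper law $P_\pi(\tau\in\cdot)$. Your write-up merely makes explicit the tightness/Slutsky step that the paper leaves implicit.
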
	
\subsubsection{Generating function}
\label{subs-ext}
For $s\in [0,1],$ let $a_n(s)=E_n[s^\tau]$ and $\psi(s,z)=\sum_{n=1}^\infty a_nz^n.$ Note that $a_0=1.$ The process has the following first-step decomposition:
\beqn
\label{mrecur}
X_{t+1}=\ind_{\{\omega_t=1\}}(X_t+1)+\ind_{\{\omega_t=0\}}\cdot \mbox{\rm Bin}(X_t,1-c),
\feqn
where $\ind_A$ stands for the indicator of the event $A,$ namely $\ind_A(\omega)=1$ if $\omega\in A$ and $\ind_A(\omega)=0$ if $\omega\not\in A.$, and $\omega_t$ is defined in \eqref{omega}. The generating function $\psi(s,z)$ can be evaluated using \eqref{mrecur} and an analytical method of \cite{Artalejo}. In particular, we have
\begin{theorem}
	\label{taul}
	For $s\in [0,1],$ let $\eta_0(s)=1$ and
	\beq
	\eta_n(s)=(-1)^n (1-c)^{\frac{n(n-1)}{2}}\Bigl(\frac{(1-p)s}{1-ps}\Bigr)^n
	\prod_{k=1}^n \frac{1}{1-(1-c)^k},\qquad n\in\nn.
	\feq
	Then
	\beqn
	\label{tauf}
	E_1[s^\tau]=1+\frac{1-s}{ps}-\frac{\sum_{n=1}^\infty \eta_n(s)}
	{\sum_{n=1}^\infty \eta_n(s) \frac{ps(1-c)^n}{1-ps+ps(1-c)^n}}.
	\feqn
\end{theorem}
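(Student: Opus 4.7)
The plan is to use the first-step decomposition \eqref{mrecur} and then apply an analytical method in the spirit of \cite{Artalejo}. For $n\ge 1$ the decomposition yields
$$a_n(s) = ps\,a_{n+1}(s) + (1-p)s\sum_{k=0}^{n}\binom{n}{k}(1-c)^k c^{n-k}\, a_k(s),$$
with the boundary value $a_0(s)=1$. Introducing $\psi(s,z)=\sum_{n=1}^{\infty}a_n(s)z^n$, multiplying the recursion by $z^n$, summing, and interchanging the order of the double sum (the inner sum over $n$ collapses to a rational function of the M\"obius map $T(z):=\frac{(1-c)z}{1-cz}$) gives the functional equation
$$(z-ps)\,\psi(s,z) = -psz\,a_1(s) + \frac{(1-p)scz^2}{1-cz} + \frac{(1-p)sz}{1-cz}\,\psi(s,T(z)).$$

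The next step is to iterate this equation along the orbit $z_n := T^n(ps)$. In the coordinate $u=z/(1-z)$ the map $T$ becomes the contraction $u\mapsto (1-c)u$, so $z_n\to 0$ and explicitly
$$z_n = \frac{ps(1-c)^n}{1-ps+ps(1-c)^n},$$
which is precisely the factor appearing in the denominator of \eqref{tauf}. Because the prefactor $(z-ps)$ vanishes at $z=ps$, the $n=0$ instance of the functional equation forces
$$\psi(s,T(ps)) = \frac{p(1-cps)\,a_1(s)}{1-p} - cps,$$
and for $n\ge 1$ the equation at $z_n$ expresses $\psi(s,z_n)$ as an affine function of $\psi(s,z_{n+1})$ with coefficients linear in $a_1(s)$. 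Unfolding and using $\psi(s,z_n)\to \psi(s,0)=0$, which follows from $|a_n(s)|\le 1$ together with $z_n=O((1-c)^n)$, reduces the problem to a single scalar equation for $a_1(s)$, whose solution is $E_1[s^\tau]$.

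The coefficients $\eta_n(s)$ emerge from the per-step multiplicative factors in this iteration. Setting $D_n:=1-ps+ps(1-c)^n$, one checks $1-cz_n = D_{n+1}/D_n$ and $z_n-ps = -ps(1-ps)(1-(1-c)^n)/D_n$, so the per-step ratio equals
$$\frac{(1-p)s\,z_n}{(z_n-ps)(1-cz_n)} = -\frac{(1-p)s}{1-ps}\cdot\frac{(1-c)^n\,D_n}{(1-(1-c)^n)\,D_{n+1}}.$$
Over $n$ steps the $D_n/D_{n+1}$ ratios telescope, the factors $(1-c)^n$ accumulate to $(1-c)^{n(n-1)/2}$ after reindexing, the signs combine to $(-1)^n$, and the denominator factors $1-(1-c)^n$ generate $\prod_{k=1}^n \frac{1}{1-(1-c)^k}$, assembling exactly into $\eta_n(s)$. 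Solving the linear equation for $a_1(s)$ then produces the quotient $\sum_{n\ge 1}\eta_n(s)/\sum_{n\ge 1}\eta_n(s) z_n$ on the coefficient-of-$a_1$ side, while the constant $1+(1-s)/(ps)$ comes from the inhomogeneous contributions after using $cps=1-D_1$. The main obstacle is precisely this algebraic bookkeeping: the auxiliary series generated by the $\frac{(1-p)scz^2}{1-cz}$ inhomogeneity, combined with the constant $cps$ produced at the $n=0$ step, must simplify through the same telescoping so that all contributions reorganize into the clean closed form \eqref{tauf}.
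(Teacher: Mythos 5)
Your proposal follows essentially the same route as the paper: the first-step recursion for $a_n(s)$, the functional equation for the generating function involving the M\"obius map $h(z)=\frac{(1-c)z}{1-cz}$, iteration along the orbit $h_n(ps)=\frac{ps(1-c)^n}{1-ps+ps(1-c)^n}$, and solving the resulting scalar equation for $a_1(s)$; the only cosmetic difference is that the paper multiplies through by $(ps-z)$ and iterates for general $z$ before setting $z=ps$, whereas you substitute $z=ps$ first. Your functional equation, orbit formula, and per-step ratios all check out against the paper's computation.
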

The proof of the theorem is similar to the proof of Theorem~3.1, part (ii), in \cite{Artalejo}. Namely, an application of \eqref{mrecur} leads
to a recursive equation  for the generating function $\psi(s,z)$ of a type that has been analyzed in \cite{Artalejo}. We comment that through the recurrence relation \eqref{eq:recur_an}, we can obtain an explicit formula for $E_n [s^{\tau}]$ for each $n\in\N$.
The proof below is provided for the sake of completeness.
\begin{proof}
We assume throughout the argument that $s,z\in(0,1).$ For simplicity of notation, we will occasionally suppress the dependence of underlying functions on the parameter  $s.$ Using \eqref{mrecur}, we obtain
\beq
\label{eq:recur_an}
a_n=psa_{n+1}+(1-p)s\sum_{k=0}^n \binom{n}{k}c^{n-k}(1-c)^ka_k,\qquad n\in\nn.
\feq
Multiplying by $z^n$ and summing over $n$ from $1$ to $\infty$ yields
\beq
\psi(z)-1&=&\frac{ps}{z}\bigl\{\psi(z)-1-a_1z\bigr\}-(1-p)s
\\
&&
\qquad
+(1-p)s\sum_{k=0}^\infty z^k(1-c)^ka_k
\sum_{n=k}^\infty \binom{n}{k} (cz)^{n-k}
\\
&=&
\frac{ps}{z}\bigl\{\psi(z)-1-a_1z\bigr\}-(1-p)s+\frac{(1-p)s}{1-cz}
\sum_{k=0}^\infty \Bigl(\frac{z-cz}{1-cz}\Bigr)^ka_k
\\
&=&
\frac{ps}{z}\bigl\{\psi(z)-1-a_1z\bigr\}-(1-p)s+\frac{(1-p)s}{1-cz}\psi\Bigl(\frac{z-cz}{1-cz}\Bigr),
\feq
where we used the negative binomial formula $\sum_{n=k}^\infty \binom{n}{k} x^{n-k}=(1-x)^{-k-1}$ with $x=cz.$  Thus
\beqn
\nonumber
\psi(s,z)&=&\frac{ps}{ps-z}-\frac{z}{ps-z}\bigl(1-psa_1(s)-(1-p)s\bigr)
\\
\nonumber
&&
\qquad
-\frac{(1-p)sz}{(1-cz)(ps-z)}\psi\Bigl(s,\frac{z-cz}{1-cz}\Bigr)
\\
\nonumber
&=&
1+\frac{z}{ps-z}\bigl(psa_1(s)+(1-p)s\bigr)
\\
\label{step}
&&
\qquad
-\frac{(1-p)sz}{(1-cz)(ps-z)}\psi\Bigl(s,\frac{z-cz}{1-cz}\Bigr).
\feqn
Let
\beqn
\label{gfor}
g(s,z):=ps-z+z\bigl(psa_1(s)+(1-p)s\bigr)
\feqn
and
\beq
\varphi(z)=\psi(s,z)(ps-z).
\feq
For $k\geq 1,$ let $h(z)=\frac{z-cz}{1-cz},$ $h_0(z)=z,$ and $h_k(z)=h\bigl(h_{k-1}(z)\bigr)$ for $k\in\nn.$
It is easy to verify that
\beqn
\label{hfor}
h_k(z)=\frac{z(1-c)^k}{1-\bigl(1-(1-c)^k\bigr)z}.
\feqn
In this notation, \eqref{step} can be rewritten as
\beqn
\label{it}
\varphi(s,z)=g(s,z)+\frac{(1-p)sh(z)}{(1-c)(h(z)-ps)}\varphi\bigl(s,h(z)\bigr).
\feqn
Note that $h_k(z)\in (0,z)$ for $z\in(0,1).$ Consequently, taking in account \eqref{hfor} and that $a_n(s)\in (0,1)$ for all $s\in (0,1),$
\begin{itemize}
\item [(i)] For any $z\in (0,1),$ $h_k(z)$ decreases, as $k\to\infty,$ to zero, which is the smallest of two fixed points of $h.$
\item [(ii)] $\psi\bigl(s,h_k(z)\bigr)\leq \psi(s,z)\leq \sum_{n=0}^\infty z^n <\infty$ for all $k\in\zz_+.$
\item [(iii)] We have:
\beq
-1<-z(1-s)<ps-z+z(1-p)s\leq g(s,z) \leq ps-z+z\bigl(ps+(1-p)s\bigr)<1,
\feq
and hence $g(s,z)$ is uniformly bounded for $s,z\in(0,1).$
\item [(iv)] For $z\leq ps$ and $k\in\zz_+,$
\beqn
\label{sois}
0\leq \frac{(1-p)sh_k(z)}{(1-c)\bigl(ps-h_{k}(z)\bigr)}\to 0,\qquad \mbox{\rm as}~k\to\infty.
\feqn
\end{itemize}
Thus, one can iterate \eqref{it} to obtain
\beq
\varphi(s,z)=g(s,z)+\sum_{n=1}^\infty g\bigl(s,h_n(z)\bigr)\prod_{k=1}^n \frac{(1-p)sh_k(z)}{(1-c)\bigl(h_{k}(z)-ps\bigr)}.
\feq
Plugging in into this formula $z=ps$ yields, taking into account that $\varphi(s,z)=0,$
\beqn
\label{lasteq}
0=g(s,z)+\sum_{n=1}^\infty g\bigl(s,h_n(z)\bigr)\frac{(1-p)^ns^n(1-c)^{\frac{n(n-1)}{2}}}
{(ps-1)^n}\prod_{k=1}^n \frac{1}{1-(1-c)^k}.
\feqn
This yields \eqref{tauf} by virtue of \eqref{gfor} and \eqref{hfor}. In fact, after a suitable renaming of variables,
equation \eqref{lasteq} for $a_1(s)$ is analogous to (3.12) in \cite{Artalejo}, while our \eqref{tauf} is its solution (3.4) in \cite{Artalejo}.
\end{proof}
\bibliographystyle{amsplain}

\providecommand{\bysame}{\leavevmode\hbox to3em{\hrulefill}\thinspace}
\providecommand{\MR}{\relax\ifhmode\unskip\space\fi MR }
\providecommand{\MRhref}[2]{%
  \href{http://www.ams.org/mathscinet-getitem?mr=#1}{#2}
}
\providecommand{\href}[2]{#2}
\begin{thebibliography}{}

\end{thebibliography}


\begin{thebibliography}{100}
\bibitem{Artalejo}
J.~R.~Artalejo, A.~Economou and M.~J.~Lopez-Herrero (2007). Evaluating growth measures in populations subject to
binomial and geometric catastrophes. Math. Biosci. Eng. 4, 573-594.

\bibitem{Brockwell}
P.~J.~Brockwell (1986).
The extinction time of a general birth and death process with catastrophes.
J. Appl. Probab. 23, 851-858.

\bibitem{Brockwell et al.}
P.~J.~Brockwell, J.~Gani and S.~I.~Resnick (1982).
Birth, immigration and catastrophe processes.
Adv. in Appl. Probab. 14, 709-731.

\bibitem{Burdzy}
K.~Burdzy and W.~S.~Kendall (2000).
Efficient {M}arkovian couplings: examples and counterexamples.
Ann. Appl. Probab. 10, 362-409.

\bibitem{Cairns}
B.~J.~Cairns (2009).
Evaluating the expected time to population extinction with semi-stochastic models.
Math. Popul. Stud. 16, 199-220.


\bibitem{persistent4}
B.~J.~Cairns and P.~K.~Pollett (2005).
Approximating persistence in a general class of population processes.
Theoret. Population Biol. 68, 77-90.


\bibitem{Economou}
A.~Economou (2004).
The compound Poisson immigration process subject to binomial catastrophes.
J. Appl. Probab. 41, 508-523.

\bibitem{EandF}
A.~Economou and D.~Fakinos (2008).
Alternative approaches for the transient analysis of Markov chains with catastrophes.
J. Stat. Theory Pract. 2, pp.183-197.


\bibitem{Ewens}
W.~J.~Ewens, P.~J.~Brockwell, J.~M.~Gani, and S. I. Resnick (1987).
Minimum viable population size in the presence of catastrophes.
Pp. 59-68 in M. E. Soule, editor. Viable Populations for Conservation. Cambridge University Press, Cambridge, UK.


\bibitem{Hanson}
F.~B.~Hanson and H.~C.~Tuckwell (1978).
Persistence times of populations with large random fluctuations.
Theoret. Population Biol. 14, 46-61.

\bibitem{Hanson1}
F.~B.~Hanson and H.~C.~Tuckwell (1981).
Logistic growth with random density independent disasters.
Theoret. Population Biol. 19, 1-18.

\bibitem{Chernoff}
W.~Hoeffding (1963).
Probability inequalities for sums of bounded random variables.
J. Amer. Stat. Assoc. 58, 13–30.

\bibitem{Huillet}
T.~E.~Huillet (2011).
On a Markov chain model for population growth subject to rare catastrophic events.
Physica A: Statistical Mechanics and its Applications 390, 4073-4086.

\bibitem{Fabio}
V.~Junior, F.~Machado and A.~Roldan-Correa (2016).
Dispersion as a survival strategy.
J. Stat. Phys. 164, 937-951.

\bibitem{Kaplan}
N.~Kaplan, A.~Sudbury and   T.~S.~Nilsen (1975).
A branching process with disasters.
J. Appl. Probab. 12, 47-59


\bibitem{Kapodistria}
S.~Kapodistria, T.~Phung-Duc and J.~Resing (2016).
Linear birth/immigration-death process with binomial catastrophes.
Probab. Engrg. Inform. Sci. 30, 79-111.

\bibitem{Key}
E.~S.~Key (1987). Limiting distributions and regeneration times for multitype branching processes
with immigration in a random environment. Ann. Probab. 15, 344-353.

\bibitem{Kmeyn}
I.~Kontoyiannis and S.~P.~Meyn (2012).
Geometric ergodicity and the spectral gap of non-reversible Markov chains.
Probab. Theory Related Fields 154, 327-339.

\bibitem{Lande}
R.~Lande (1993).
Risks of population extinction from demographic and environmental stochasticity and random catastrophes.
American Naturalist 142, 911-927.


\bibitem{Leite}
M.~C.~A.~Leite, N.~P.~Petrov and E.~Weng (2012).
Stationary distributions of semistochastic processes with disturbances at random times and with random severity.
Nonlinear Anal. Real World Appl. 13, 497-512.

\bibitem{LPW}
D.~A.~Levin, Y.~Peres and E.~L.~Wilmer (2009).
Markov chains and mixing times.
American Mathematical Society, Providence, RI.

\bibitem{persistent}
S.~N.~Majumdar and D.~Dhar (2001).
Persistence in a stationary time series.
Phys. Rev. E 64, 046123.

\bibitem{Mangel}
M.~Mangel and C.~Tier (1993).
Dynamics of metapopulations with demographic stochasticity and environmental catastrophes.
Theoret. Population Biol. 44, 1-31.


\bibitem{McKenzie}
E.~McKenzie (2003). Discrete variate time series. In: D. N. Shanbhag and C. R. Rao (eds),
Handbook of Statistics, Elsevier Science, pp. 573-606.


\bibitem{Neuts}
M.~F.~Neuts (1994). An interesting random walk on the non-negative integers.
J. Appl. Probab. 31, 48-58.

\bibitem{rare}
R.~R.~Paine (2000).
If a population crashes in prehistory, and there is no paleodemographer there to hear it, does it make a sound?
Am. J. Phys. Anthropol. 112, 181-190.

\bibitem{rare1}
J.~F.~Silva, J.~Raventos, H.~Caswell, and M.~C.~Trevisan (1991). Population responses to fire
in a tropical savanna grass, andropogon semiberbis: a matrix model approach.
J. Ecol. 79, 345-355.

\bibitem{Thor}
H.~Thorisson (2000). Coupling, Stationarity, and Regeneration, Springer, New-York.

\bibitem{VE}
S.~Tuljapurkar (1990). Population Dynamics in Variable Environments.
Lecture Notes in Biomathematics, Vol. 85, Springer-Verlag.


\bibitem{Weis}
C.~H.~Wei{\ss} (2008). Thinning operations for modeling time series of counts - a survey.
AStA Adv. Stat. Anal. 92, 319-341.

\bibitem{rcinar}
H.~Zheng, I.~V.~Basawa and S.~Datta (2007). First-order random coefficient integer-valued autoregressive processes, J. Statist. Plann. Inference 173 (2007), 212-229.
\end{thebibliography}

\end{document}